\newtheorem{thm}{Theorem}[section]
\newtheorem{case}[]{Case}
\newtheorem{lem}[thm]{Lemma}%[section]
\newtheorem{prop}[thm]{Proposition}%[section]
\newtheorem{claim}[]{Claim}
\def\dfn#1{{\sl #1}}
\def\qed{\hfill\square}
\def\qed{ \hfill $\blacksquare$}
\begin{document}
\title{Some exact values on Ramsey numbers related to fans}

\author{Qinghong Zhao and Bing Wei\\
\small Department of Mathematics, University of Mississippi, University,  MS 38677,  USA}

\date{}
\maketitle	
\begin{abstract}
For two given graphs $F$ and $H$, the Ramsey number $R(F,H)$ is the smallest integer $N$ such that any red-blue edge-coloring of the complete graph $K_N$ contains a red $F$ or a blue $H$. When $F=H$, we simply write $R_2(H)$. For an positive integer $n$, let $K_{1,n}$ be a star with $n+1$ vertices, $F_n$ be a fan with $2n+1$ vertices consisting of $n$ triangles sharing one common vertex, and $nK_3$ be a graph with $3n$ vertices obtained from the disjoint union of $n$ triangles. In 1975, Burr, Erd\H{o}s and Spencer \cite{B} proved that $R_2(nK_3)=5n$ for $n\ge2$. However, determining the exact value of $R_2(F_n)$ is notoriously difficult. So far, only $R_2(F_2)=9$ has been proved.   Notice that both $F_n$ and $nK_3$ contain $n$ triangles and $|V(F_n)|<|V(nK_3)|$ for all $n\ge 2$. Chen, Yu and Zhao (2021) speculated that $R_2(F_n)\le R_2(nK_3)=5n$ for  $n$ sufficiently large. In this paper, we first prove that $R(K_{1,n},F_n)=3n-\varepsilon$ for $n\ge1$, where $\varepsilon=0$ if $n$ is odd and $\varepsilon=1$ if $n$ is even. Applying the exact values of $R(K_{1,n},F_n)$, we will confirm  $R_2(F_n)\le 5n$  for $n=3$ by showing that $R_2(F_3)=14$.

\noindent{\bf Key words}: Ramsey number, fan, star.
\end{abstract}	
\footnotetext{Email-address: qhzhao91@gmail.com (Q. Zhao) and bwei@olemiss.edu (B. Wei).}	
	
\section{Introduction}
 All graphs considered are finite, simple and undirected. Given a graph $G$, we denote by $V(G)$ the vertex set of $G$ and by $|V(G)|$ the number of vertices in $V(G)$. For a vertex $v\in V(G)$, let $N_G(v)$ denote the set of neighbors of $v$ in $G$. The degree of $v$ in $G$ is denoted by $d_G(v)$, that is, $d_G(v)=|N_G(v)|$. For a subset $S\subseteq V(G)$, let $G[S]$ denote the subgraph induced by the vertices of $S$, and we simply write $G-S$ as $G[V(G)-S]$. We use $C_n, T_n$ and $K_n$ to denote the cycle, tree and complete graph or clique on $n$ vertices, respectively. Given $k$ disjoint graphs $G_1,\ldots,G_k$, $G_1\cup\cdots\cup G_k$ denotes their disjoint union. In particular, if $G=G_1=\cdots=G_k$, we simply write $kG$. $G_1+G_2$ denotes the graph obtained from $G_1\cup G_2$ by joining every vertex in $V(G_1)$ to every vertex in $V(G_2)$. A star $K_{1,n}$ is $\{v\}+nK_1$, a fan $F_n$ is $\{v\}+nK_2$ and a book $B_n$ is $K_2+nK_1$, where the vertex $v$ is called the center of $K_{1,n}$ and $F_n$. For any integer $k\ge1$, we define $[k]=\{1,\ldots,k\}$. Given a complete graph whose edges are colored with red and blue, we write $R$ and $B$ for the graphs consisting of all red edges and blue edges, respectively. Given disjoint subsets $X,Y\subseteq V(G)$, if each vertex in $X$ is adjacent to all vertices in $Y$ and all the edges between $X$ and $Y$ are colored with the same color, then we say that $X$ is \dfn{$mc$-adjacent} to $Y$, that is, $X$ is \dfn{blue-adjacent} to $Y$ if all the edges between $X$ and $Y$ are colored with blue. 

Given two graphs $F$ and $H$, the Ramsey number $R(F,H)$ is the minimum integer $N$ such that any 2-edge-coloring of $K_N$ with colors red and blue yields a red $F$ or a blue $H$. Let $R(H,H)=R_2(H)$ be the diagonal Ramsey number. Then $R(F,H)$ is called off-diagonal Ramsey number when $F\neq H$. If both $F$ and $H$ are complete graphs, then $R(F,H)$ is usually called the classical Ramsey number as well. However, finding the classical Ramsey number is far from trivial in general. Therefore, it is natural to consider the generalized Ramsey numbers of graphs other than complete graphs. Chv\'atal and Harary \cite{CHV,CHVH1,CHVH2,CHVH3} first studied the generalized Ramsey numbers and a famous early results of Chv\'atal \cite{CHV} showed that $R(T_n,K_m)=(n-1)(m-1)+1$ for all positive integers $m$ and $n$. Determining the Ramsey numbers of trees versus other graphs is also a hot topic in graph theory. In 2015, Zhang, Broersma and Chen \cite{ZB} studied the Ramsey numbers of stars versus fans and proved the following result.
\begin{thm}[\cite{ZB}]\label{thmZBC}
$R(K_{1,n}, F_m) = 2n+1$ for all integers $n\ge m^2-m$ and $m\neq 3,4,5$, and this lower bound is the best possible. Moreover, $R(K_{1,n}, F_m)=2n+1$ for $n\ge 6m-7$ and $m=3,4,5$.
\end{thm}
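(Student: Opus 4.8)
The plan is to prove both inequalities directly. For the lower bound, which shows that the value $2n+1$ is best possible, I would $2$-color $K_{2n}$ by taking the red graph $R$ to be two disjoint cliques $K_n\cup K_n$. Then every vertex has red degree $n-1<n$, so there is no red $K_{1,n}$, while the blue graph $B$ is the complete bipartite graph with both parts of size $n$, which contains no blue triangle and hence no blue $F_m$. This construction works for every $m\ge 1$ and yields $R(K_{1,n},F_m)\ge 2n+1$; matching it with the upper bound gives exactness.

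For the upper bound I would fix a red-blue coloring of $K_{2n+1}$ containing no red $K_{1,n}$ and produce a blue $F_m$. The absence of a red $K_{1,n}$ means $d_R(v)\le n-1$ for every $v$, so $d_B(v)\ge (2n+1-1)-(n-1)=n+1$; thus the blue graph $B$ has minimum degree at least $n+1$. I would then take a maximum blue matching $M=\{x_iy_i:i\in[k]\}$ and let $U$ be its set of uncovered vertices. Since $M$ is maximum, $U$ contains no blue edge, i.e. $U$ is a red clique; to avoid a red $K_{1,n}$ we must have $|U|\le n$, whence $k\ge (n+1)/2$. Moreover $|U|=2n+1-2k$ is odd, so $U\neq\es$.

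The heart of the argument is to locate a single vertex that is blue-adjacent to both endpoints of at least $m$ edges of $M$, for then that vertex together with those $m$ edges is a blue $F_m$. I would argue by a dichotomy on $k$. If $M$ is relatively small, then each $w\in U$ has its $\ge n+1$ blue neighbors concentrated on the $2k$ matched vertices: writing $a_w,b_w$ for the number of edges of $M$ with two, respectively exactly one, endpoint blue-adjacent to $w$, we have $2a_w+b_w\ge n+1$ and $a_w+b_w\le k$, so $a_w\ge n+1-k$. Hence if $k\le n-m+1$ some $w\in U$ dominates $\ge m$ edges and we are done. We may therefore assume $k\ge n-m+2$, i.e. $M$ is near-perfect with $|U|\le 2m-3$.

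The near-perfect case is where I expect the real difficulty, and where the hypothesis $n\ge m^2-m$ enters. If no blue $F_m$ exists then, for every vertex $v$, the blue neighborhood $N_B(v)$ contains no blue matching of size $m$; a standard extremal fact (delete a maximum blue matching of $B[N_B(v)]$, of size $\le m-1$) then shows $N_B(v)$ contains an edgeless-in-blue set, i.e. a red clique, of size at least $(n+1)-2(m-1)=n-2m+3$. Thus every vertex is blue-adjacent to a large red clique; since each such clique has at most $n$ vertices and each of its vertices has at most $2m-3$ red neighbors outside it, only $O(m)$ such cliques can coexist, and almost all edges between them are blue. I would then try to extract $m$ disjoint blue edges lying in the common blue neighborhood of one vertex, producing the blue $F_m$. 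Tracking the losses here, each red clique one peels off shrinks the usable vertex set by $\Theta(m)$ and the process is iterated about $m$ times, is precisely what forces the quadratic threshold $n\ge m^2-m$. The cases $m\in\{3,4,5\}$ escape this clean count and require a separate, finer analysis, which is the source of the weaker stated threshold $n\ge 6m-7$ for them. Reconciling the two regimes of the dichotomy under a single threshold, and controlling the tight interplay between red clique sizes and blue matchings in the near-perfect case, is the main obstacle.
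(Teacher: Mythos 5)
First, a point of orientation: the paper does not prove this statement at all --- it is Theorem~\ref{thmZBC}, quoted from Zhang, Broersma and Chen \cite{ZB} and used as a black box --- so there is no internal proof to compare yours against; your proposal must stand on its own, and it does not. The lower-bound coloring (red $K_n\cup K_n$, blue $K_{n,n}$) and the first half of your dichotomy are sound: with no red $K_{1,n}$ every vertex has blue degree at least $n+1$, a maximum blue matching $M$ of size $k$ leaves an uncovered set $U\neq\emptyset$ that is a red clique of odd size, and if $k\le n-m+1$ your count $a_w\ge n+1-k\ge m$ does produce a blue $F_m$ centered at any $w\in U$. But the entire difficulty of the theorem lives in the complementary regime $k\ge n-m+2$, and there your text stops being a proof: ``I would then try to extract $m$ disjoint blue edges,'' ``tracking the losses \ldots is precisely what forces the quadratic threshold,'' ``the main obstacle.'' The observation that every blue neighborhood contains a red clique of size at least $n-2m+3$ is fine, but the ensuing assertions (that only $O(m)$ such cliques can coexist, that almost all edges between them are blue, that one can then find $m$ disjoint blue edges inside a single vertex's blue neighborhood) are neither made precise nor verified, and nothing in the sketch explains where the exact threshold $m^2-m$ comes from, nor why $m=3,4,5$ genuinely behave differently rather than merely escaping your heuristic count. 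This is an admitted gap at the heart of the argument, not a routine detail to be filled in.

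Second, you have misread what ``this lower bound is the best possible'' asserts. It refers to the hypothesis $n\ge m^2-m$, not to the value $2n+1$: the claim is that for $n\le m^2-m-1$ one has $R(K_{1,n},F_m)\ge 2n+2$, which is precisely how the present paper uses the theorem in the sentence immediately following it. Your two-clique construction says nothing about this; establishing it requires exhibiting, for each such $n$, a coloring of $K_{2n+1}$ with maximum red degree at most $n-1$ whose blue graph --- which then necessarily has minimum degree at least $n+1$ --- still contains no $F_m$, and such colorings exist only because $n$ is small relative to $m^2$. So even setting aside the unfinished upper bound, a whole clause of the statement is untouched by your proposal.
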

\noindent
Since the lower bound $n\ge m^2-m$ and $m\neq 3,4,5$ in Theorem \ref{thmZBC} is the best possible, it is easily seen that $R(K_{1,n}, F_m)\ge 2n+2$ when $n\le m^2-m-1$. However, there are very few results on the exact values, especially for the case when $n=m$. In this paper, we first study the Ramsey numbers of $K_{1,n}$ versus $F_n$ and obtain the following result.
\begin{thm}\label{thm1}
$R(K_{1,n},F_n)=3n-\varepsilon$ for $n\ge1$, where $\varepsilon=0$ if $n$ is odd and $\varepsilon=1$ if $n$ is even.
\end{thm}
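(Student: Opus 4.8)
The plan is to translate the two colour conditions into a single degree condition and then reduce everything to finding a large matching inside one blue neighbourhood. Since a red $K_{1,n}$ is exactly a vertex of red degree at least $n$, the absence of a red $K_{1,n}$ is equivalent to saying that the red graph $R$ has maximum degree at most $n-1$; equivalently, on $N$ vertices the blue graph $B$ has minimum degree at least $N-1-(n-1)=N-n$. A blue $F_n$ is precisely a vertex $v$ together with a blue matching of size $n$ inside its blue neighbourhood $N_B(v)$. Thus both halves of the theorem become statements about when a graph of prescribed minimum degree must (or need not) contain a vertex whose neighbourhood spans a matching of size $n$.

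For the lower bound I would exhibit, on $3n-\varepsilon-1$ vertices, a colouring with no red $K_{1,n}$ and no blue $F_n$ by taking the red graph to be any $(n-1)$-regular graph. Then the red maximum degree is $n-1$, so there is no red $K_{1,n}$; and every blue neighbourhood has size $(3n-\varepsilon-2)-(n-1)=2n-\varepsilon-1\le 2n-1<2n$, so it cannot contain $n$ independent blue edges and there is no blue $F_n$. The only thing to check is existence of such a regular graph, which holds because the degree--order product $(n-1)(3n-\varepsilon-1)$ is even in both parities: $n-1$ is even when $n$ is odd, and $3n-\varepsilon-1=3n-2$ is even when $n$ is even. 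This yields $R(K_{1,n},F_n)\ge 3n-\varepsilon$.

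For the upper bound, assume $N=3n-\varepsilon$ and that there is no red $K_{1,n}$, so $B$ has minimum degree at least $2n-\varepsilon$. The key local estimate is that if a vertex $v$ has blue degree $d=|N_B(v)|\ge 2n$, then every $u\in N_B(v)$ has at least $d-n$ blue neighbours inside $N_B(v)$: indeed $u$ loses at most $v$ and the $N-1-d$ vertices outside $N_B(v)\cup\{v\}$ from its blue degree, giving at least $(2n-\varepsilon)-\big((3n-\varepsilon)-d\big)=d-n$ blue neighbours within. Since $d\ge 2n$ forces $d-n\ge d/2$, the induced blue graph $B[N_B(v)]$ has minimum degree at least $d/2$, so by Dirac's theorem it is Hamiltonian and hence contains a matching of size $\lfloor d/2\rfloor\ge n$, producing a blue $F_n$ centred at $v$. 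When $n$ is odd we are done immediately, because $\varepsilon=0$ makes every vertex have blue degree at least $2n$.

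The case $n$ even is where the real obstacle lies, and it is the step I would expect to be most delicate. Here $\varepsilon=1$ only guarantees blue degree at least $2n-1$, and a neighbourhood of size exactly $2n-1<2n$ can never host a matching of size $n$, so the local estimate above is useless for a vertex of minimum blue degree. I would resolve this by a parity argument: if every one of the $3n-1$ vertices had blue degree exactly $2n-1$, the degree sum $(3n-1)(2n-1)$ would be odd (a product of two odd numbers, since $n$ is even), contradicting that it equals $2|E(B)|$. Hence some vertex has blue degree at least $2n$, and the Hamiltonicity argument above applies to it. Combining the two bounds gives $R(K_{1,n},F_n)=3n-\varepsilon$.
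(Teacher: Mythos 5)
Your proof is correct, and its skeleton coincides with the paper's: the same $(n-1)$-regular red lower-bound construction (the paper's extremal graph, a red $K_n$ joined in blue to a graph in which every vertex has $n-1$ red neighbours, is one member of your general family, which also substantiates the paper's closing remark that other extremal graphs exist), the same translation of ``no red $K_{1,n}$'' into a blue minimum-degree condition, and the identical parity argument producing a vertex of blue degree at least $2n$ when $n$ is even. The one genuine difference is the tool used to extract a blue matching of size $n$ inside a blue neighbourhood of size $d\ge 2n$: the paper invokes the Cockayne--Lorimer star--stripe Ramsey number $R(K_{1,n},nK_2)=2n$ (its Theorem \ref{CL}), so that $G[N_B(v)]$ contains either a red $K_{1,n}$ (contradiction) or a blue $nK_2$, while you argue purely degree-theoretically: every $u\in N_B(v)$ keeps at least $d-n\ge d/2$ blue neighbours inside $N_B(v)$, so $B[N_B(v)]$ is Hamiltonian by Dirac's theorem and contains a matching of size $\lfloor d/2\rfloor\ge n$. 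Your route is self-contained (it never re-uses the red/blue dichotomy inside the neighbourhood) and shows the slightly stronger fact that the blue graph alone, under the minimum-degree hypothesis, already contains the fan; the paper's route is shorter and has no boundary case, whereas your appeal to Dirac is formally vacuous for $n=1$, where $N_B(v)$ may have only two vertices, below Dirac's threshold of three --- a cosmetic gap, since two adjacent blue vertices there already form the required matching.
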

\noindent
Other results on Ramsey numbers concerning trees can be found in \cite{Ba,Bas,Bask,CZZ,CZZ1,CZZ2,GV}.
\vskip 0.1cm
The Ramsey numbers of fans versus fans have also been widely studied so far. In 1991, Li and Rousseau \cite{LR} showed that $4n+1\le R(F_m,F_n)\le4n+4m-2$ for $n\ge m\ge1$ and $R(F_1,F_n)=4n+1$ for $n\ge2$. Later, Lin and Li \cite{LL} improved the general upper bound as $R(F_m,F_n)\le4n+2m$ for $n\ge m\ge2$ and proved that $R(F_2,F_n)=4n+1$ for $n\ge2$. The latter result implies that $R_2(F_2)=9$.  However, the exact values of $R_2(F_n)$ for all $n\ge3$ are still unknown. For more related results on $R(F_m,F_n)$, see \cite{LLD,Z}. 

It is worth noting that $nK_3, F_n$ and $B_n$ are three graphs containing $n$ triangles with exactly zero, one and two vertices in common, respectively, and $|V(B_n)|\le|V(F_n)|\le|V(nK_3)|$. Thus the relationship among Ramsey numbers of such three graphs has received extensively attention and tremendous progresses on this topic have been made in recent years. In 1975, Burr, Erd\H{o}s and Spencer \cite{B} proved that $R_2(nK_3)=5n$ for $n\ge2$. Later, Rousseau and Sheehan \cite{RS} showed that $R_2(B_n)\le 4n+2$ for all $n$ and the bound is tight for infinitely many values of $n$. This shows that $R_2(B_n)\le R_2(nK_3)$ for $n\ge2$. Recently, Chen, Yu and Zhao \cite{CYZ} proved that $9n/2-5\le R_2(F_n)\le 11n/2+6$ for all $n\ge1$, which implies $R_2(B_n)<R_2(F_n)$ for sufficiently large $n$. Therefore, Chen et al. \cite{CYZ} believe that $R_2(F_n)\le R_2(nK_3)=5n$ for $n$ sufficiently large, even though they are unable to verify this. More recently, Dvo\v{r}\'ak and Metrebian \cite{DM} improved their upper bound to  $31n/6+15$ for all $n\ge1$. Note that $R_2(F_2)=9<R_2(2K_3)=10$. In this paper, we confirm $R_2(F_n)\le R_2(nK_3)=5n$ for $n=3$ by proving the following result.

\begin{thm}\label{thm2}
$R_2(F_3)=14$.
\end{thm}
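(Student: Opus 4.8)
The plan is to prove the two inequalities $R_2(F_3)\ge 14$ and $R_2(F_3)\le 14$ separately. Throughout I will use the basic reformulation that a colored $K_N$ contains a blue $F_3$ if and only if some vertex $v$ has $\nu\big(B[N_B(v)]\big)\ge 3$, where $\nu$ denotes matching number and $B[N_B(v)]$ is the blue graph induced on the blue neighborhood of $v$ (and symmetrically for red). Hence ``no monochromatic $F_3$'' means exactly that every monochromatic neighborhood $B[N_B(v)]$ and $R[N_R(v)]$ has matching number at most $2$. I will lean heavily on the identity $d_R(v)+d_B(v)=13$ for every vertex of $K_{14}$ (resp.\ $=12$ in $K_{13}$), which forces $\max\{d_R(v),d_B(v)\}\ge 7$.

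\textbf{Lower bound.} For $R_2(F_3)\ge 14$ I would exhibit an explicit red-blue coloring of $K_{13}$ with no monochromatic $F_3$ and verify the neighborhood condition above for each vertex. The natural first attempts fail and guide the search: the extremal $K_{12}$-coloring $R=2K_6$, $B=K_{6,6}$ certifying $R_2(F_n)\ge 4n+1$ cannot be extended to a $13$th vertex (any red-sparse attachment forces a blue $K_{5,5}$ in a blue neighborhood, while any blue-sparse attachment forces two red edges into a $K_6$ and hence a red $F_3$), and a disjoint union of red cliques makes the complementary complete multipartite graph contain $F_3$. So a genuinely balanced construction is needed, in which every vertex has red- and blue-degree $6$ and both color-neighborhoods (on $6$ vertices) are arranged to have matching number $2$ rather than $3$; I would build this as a structured blow-up/circulant-type coloring and check, up to the symmetry of the construction, that no vertex sees a monochromatic $3$-matching in its own color. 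The verification is a finite case check over the (few) vertex types.

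\textbf{Upper bound.} For $R_2(F_3)\le 14$ I would assume a coloring of $K_{14}$ with no monochromatic $F_3$ and derive a contradiction, splitting on the maximum monochromatic degree $\Delta$; by color symmetry assume it is realized in blue. First I treat the high-degree case $\Delta\ge 9$. Put $S=N_B(v)$ with $|S|\ge 9$. Since there is no blue $F_3$ centered at $v$, $\nu\big(B[S]\big)\le 2$, so $B[S]$ has a vertex cover of size at most $4$, leaving a blue-independent set of size $\ge|S|-4\ge 5$ that is a red clique $C$. If $|C|\ge 7$ we already have a red $K_7\supseteq F_3$; otherwise $|C|\in\{5,6\}$, and I would extend $C$ to a red $F_3$: taking $c\in C$ as a prospective center, $R[N_R(c)]\supseteq K_{|C|-1}$ already gives a red $2$-matching, so it suffices to find a vertex $z\notin C$ with at least two red neighbors in $C$ (then $c$ acquires red degree $\ge 6$ with a red $3$-matching in its neighborhood). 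If no such $z$ exists, every outside vertex sends at most one red edge to $C$, hence at least $|C|-1\ge 4$ blue edges to $C$; counting these blue edges produces a vertex of $C$ with blue degree $\ge 9$, and I re-enter the same analysis on that vertex. Here the role of $R(K_{1,3},F_3)=9$ (Theorem \ref{thm1} with $n=3$) is to supply, inside any blue neighborhood of size $\ge 9$ that avoids a blue $F_3$, a red $K_{1,3}$ — a vertex of red degree $\ge 3$ — which I would combine with the red clique $C$ to locate the required vertex $z$. This case analysis should force $\Delta\le 8$, and then $d_R(v)+d_B(v)=13$ with both parts $\le 8$ yields $d_R(v),d_B(v)\in\{5,6,7,8\}$ for every vertex.

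The hard part will be the remaining balanced regime in which all monochromatic degrees lie in $\{5,6,7,8\}$, since there the neighborhoods have size only $7$ or $8$ and are too small to feed directly into $R(K_{1,3},F_3)=9$. My plan is to use $R_2(2K_3)=10$ (a consequence of the cited $R_2(nK_3)=5n$) to fix two disjoint monochromatic triangles, say blue, and then to exploit, for each vertex $v$ with $d_B(v)\ge 7$, the density forced on $R[S]$ by $\nu\big(B[S]\big)\le 2$ (Erd\H{o}s--Gallai gives at most $2|S|-3$ blue edges on $S$, so $R[S]$ is dense): I would locate in such a dense $R[S]$ a vertex of red degree $|S|-1$ whose red neighborhood carries a red $3$-matching, producing a red $F_3$, and play the red and blue matching-at-most-$2$ constraints against each other through a global degree count across overlapping neighborhoods. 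This tight, low-degree case — with its interacting red/blue matching restrictions and the need for a careful subdivision according to how the forced monochromatic triangles and dense neighborhoods intersect — is where I expect the main difficulty and the bulk of the (routine but lengthy) sub-case checking to lie.
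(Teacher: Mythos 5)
Your outline has genuine gaps at every stage, and they sit exactly where the real work lies. For the lower bound you never exhibit the required coloring of $K_{13}$: you correctly guess that a balanced blow-up is needed, but ``I would build this as a structured blow-up/circulant-type coloring and check'' is not a construction. (The paper's Proposition \ref{prop} is explicit: take the triangle-free $2$-coloring of $K_5$ and replace four of its vertices by monochromatic triangles, two red and two blue, so that the two red triangles are not red-adjacent to each other and the two blue ones are not blue-adjacent; the result is $6$-regular in each color on $13$ vertices and has no monochromatic $F_3$.) In your high-degree case $\Delta\ge 9$ two steps fail. First, when the red clique $C\subseteq N_B(v)$ has $|C|=5$, a vertex $z$ with two red neighbors in $C$ does \emph{not} force a red $F_3$: for any center $c\in C$ the relevant vertex set $(C\setminus\{c\})\cup\{z\}$ has only five vertices, so its red matching number is at most $2$, while a red $F_3$ needs a $3$-matching on six vertices. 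Second, your closing count is off: with $|C|=5$ the nine outside vertices send at least $36$ blue edges into $C$, giving a vertex of $C$ of blue degree at least $8$, not $9$ (and with $|C|=6$ only at least $7$), so the proposed ``re-enter the same analysis'' loop never restarts. All of this is unnecessary anyway: the paper disposes of $m\ge 9$ in one line, because $R(F_3,3K_2)=9$ (Theorem \ref{LL}) applied inside a monochromatic neighborhood of size at least $9$ yields either a $3K_2$ of the same color there (hence a monochromatic $F_3$ with that vertex as center) or an $F_3$ of the opposite color outright.

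The decisive gap, however, is the balanced regime $m\in\{7,8\}$, which you defer as ``routine but lengthy'' sub-case checking; it is in fact essentially the whole proof, and your proposed substitutes do not function. The key step you assert --- that $\nu\big(B[S]\big)\le 2$ makes $R[S]$ dense enough to contain a vertex of red degree $|S|-1$ whose red neighborhood carries a red $3$-matching --- is false as a general implication: if $B[S]$ is a blue $K_5$ together with three isolated vertices ($|S|=8$, $\nu(B[S])=2$), then $R[S]$ is the join of a red $K_3$ with five independent vertices, which has $18$ red edges and three vertices of red degree $7$, yet every red neighborhood in it has red matching number exactly $2$, so no red $F_3$ arises inside $S$. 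Ruling out precisely such configurations (a monochromatic $K_5$, and before that a monochromatic $K_4+2K_1$) is what the paper's Lemma \ref{lemz} and Lemma \ref{lemzz} accomplish, and their proofs, together with Claim \ref{claim2} (no blue $K_3+3K_1$ inside $N_R(u)$) and the two-case analysis on $d_R(u)\in\{7,8\}$, constitute the bulk of the argument. Likewise, $R_2(2K_3)=10$ gives two disjoint monochromatic triangles of \emph{some} colors, with no control over their color pattern or attachment to the rest of the graph, so it cannot serve as the organizing tool you want. As written, the proposal proves neither inequality.
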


For more information on Ramsey numbers, we refer the readers to two excellent surveys \cite{CFS,R}. We will prove Theorem \ref{thm1} in Section 2 and give the proof of Theorem \ref{thm2} in Section 3 after first show two structural lemmas.

\section{Proof of Theorem \ref{thm1} }
We first list three theorems that shall be applied in the proofs of Theorems \ref{thm1} and \ref{thm2}.
\begin{thm}[Hall's Theorem\cite{H}]\label{HH}
A bipartite graph $G$ with bipartition $X, Y$ has a matching that saturates $X$ if and only if $|N_G(S)|\ge |S|$ for all $S\subseteq X$.
\end{thm}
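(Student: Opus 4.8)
The plan is to establish both implications, the forward direction being essentially immediate and the reverse direction carrying all the weight. For necessity, suppose $G$ admits a matching $M$ that saturates $X$. Then for any $S\subseteq X$, the $M$-partners of the vertices of $S$ are pairwise distinct and all lie in $N_G(S)$, giving an injection from $S$ into $N_G(S)$ and hence $|N_G(S)|\ge |S|$. So the real content is showing that the condition $|N_G(S)|\ge|S|$ for all $S\subseteq X$ is sufficient, and here I would proceed by induction on $|X|$, keeping Hall's condition as a standing hypothesis.

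For the base case $|X|\le 1$ the claim is trivial: if $|X|=1$, the condition applied to $X$ itself yields a neighbor, and matching $x$ to it saturates $X$. For the inductive step I would split into two cases depending on how tightly the condition is met. \emph{Case 1: every nonempty proper subset $S\subsetneq X$ satisfies the strict inequality $|N_G(S)|\ge|S|+1$.} Then I pick any $x\in X$ together with a neighbor $y\in N_G(x)$, match them, and pass to $G'=G-\{x,y\}$ with parts $X'=X\setminus\{x\}$ and $Y'=Y\setminus\{y\}$. Deleting the single vertex $y$ drops each neighborhood size by at most one, so for every $S\subseteq X'$ I get $|N_{G'}(S)|\ge|N_G(S)|-1\ge|S|$; the induction hypothesis then supplies a matching of $G'$ saturating $X'$, and adjoining the edge $xy$ saturates $X$.

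\emph{Case 2: some nonempty proper subset $S_0\subsetneq X$ is tight, i.e.\ $|N_G(S_0)|=|S_0|$.} This is where I expect the main obstacle to lie, since a careless split fails to preserve Hall's condition. The idea is to decompose $G$ along $S_0$: let $G_1$ be the subgraph on $S_0\cup N_G(S_0)$ and $G_2$ the subgraph on $(X\setminus S_0)\cup(Y\setminus N_G(S_0))$. Hall's condition is inherited by $G_1$, and since $|S_0|<|X|$ the induction hypothesis gives a matching $M_1$ of $G_1$ saturating $S_0$. The delicate point is verifying Hall's condition for $G_2$: for any $T\subseteq X\setminus S_0$, the key identity $N_G(S_0\cup T)=N_G(S_0)\cup\bigl(N_{G_2}(T)\bigr)$ together with the disjointness of $N_G(S_0)$ from $N_{G_2}(T)$ yields $|S_0|+|N_{G_2}(T)|=|N_G(S_0\cup T)|\ge|S_0\cup T|=|S_0|+|T|$, whence $|N_{G_2}(T)|\ge|T|$. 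Applying induction to $G_2$ produces a matching $M_2$ saturating $X\setminus S_0$, and because $M_1$ meets $Y$ only inside $N_G(S_0)$ while $M_2$ meets $Y$ only outside it, the union $M_1\cup M_2$ is a matching of $G$ saturating all of $X$, completing the induction.
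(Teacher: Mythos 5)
Your proof is correct and complete: it is the classical tight-set induction (the Halmos--Vaughan argument), with both delicate points handled properly --- in Case 1 you correctly use the strict inequality on nonempty proper subsets to absorb the loss of the single vertex $y$, and in Case 2 the identity $N_G(S_0\cup T)=N_G(S_0)\cup N_{G_2}(T)$ with the disjointness of the two pieces and the tightness $|N_G(S_0)|=|S_0|$ gives exactly the Hall condition for $G_2$. Note, however, that there is no in-paper proof to compare against: the paper states Hall's Theorem only as a cited auxiliary result (attributed to \cite{H}) and uses it as a black box in the proofs of Lemma \ref{lemz} and Claim \ref{claim2}, so your argument supplies a standard self-contained proof of a result the authors simply quote.
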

\begin{thm}[\cite{CL}]\label{CL} 
$R(K_{1,n},nK_2)=2n$ for all $n\ge1$.
\end{thm}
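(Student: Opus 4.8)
The plan is to prove both inequalities $R(K_{1,n},nK_2)\le 2n$ and $R(K_{1,n},nK_2)\ge 2n$, where throughout a \emph{red $K_{1,n}$} means a vertex with at least $n$ red neighbors and a \emph{blue $nK_2$} means a blue matching of size $n$. For the lower bound I would exhibit a red-blue coloring of $K_{2n-1}$ avoiding both. Partition $V(K_{2n-1})=A\cup B$ with $|A|=n-1$ and $|B|=n$, color every edge meeting $A$ blue, and color every edge inside $B$ red. Then $A$ is a blue vertex cover of size $n-1$, so every blue matching has size at most $n-1$ and there is no blue $nK_2$; meanwhile the red edges form a $K_n$ on $B$, so every red degree is at most $n-1$ and there is no red $K_{1,n}$. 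This shows $R(K_{1,n},nK_2)\ge 2n$.

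For the upper bound I would take an arbitrary red-blue coloring of $K_{2n}$ that contains no blue $nK_2$ and produce a red $K_{1,n}$. Let $M=\{x_1y_1,\ldots,x_my_m\}$ be a \emph{maximum} blue matching; by assumption $m\le n-1$, so the set $U$ of vertices unsaturated by $M$ satisfies $|U|=2n-2m\ge 2$. Fix two distinct vertices $u,w\in U$. First, $u$ and $w$ have no blue neighbor inside $U$, since a blue edge between two unsaturated vertices could be added to $M$, contradicting maximality; hence all blue neighbors of $u$ and of $w$ lie among the $2m$ saturated vertices.

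The key step, which I expect to be the main obstacle, is a local augmenting-path bound on each matching edge. For a fixed $x_iy_i\in M$, among the four potential blue edges $ux_i,\,uy_i,\,wx_i,\,wy_i$, a ``crossing'' blue pair --- either $ux_i$ and $wy_i$ both blue, or $uy_i$ and $wx_i$ both blue --- would yield a blue augmenting path from $u$ to $w$ through $x_iy_i$, producing a larger blue matching and contradicting the maximality of $M$. A short case check shows that avoiding both crossing pairs forces at most two of these four edges to be blue. Summing over the $m$ edges of $M$ gives $d_{B}(u)+d_{B}(w)\le 2m\le 2n-2$, so at least one of $u,w$, say $u$, has blue degree at most $n-1$. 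Since $u$ has $2n-1$ neighbors in $K_{2n}$, its red degree is at least $(2n-1)-(n-1)=n$, giving a red $K_{1,n}$ centered at $u$. Thus every coloring of $K_{2n}$ yields a red $K_{1,n}$ or a blue $nK_2$, establishing $R(K_{1,n},nK_2)\le 2n$ and, with the construction above, the exact value $2n$. (As an alternative to the augmenting argument, once no red $K_{1,n}$ forces blue minimum degree at least $n$ on $2n$ vertices, a Dirac-type bound already guarantees a blue perfect matching; I prefer the augmenting-path count since it is self-contained and directly trades a missing blue edge for a red leaf.)
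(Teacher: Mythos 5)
Your proof is correct in all its steps: the $K_{2n-1}$ construction (blue vertex cover $A$ of size $n-1$, red $K_n$ on $B$) is a valid lower-bound coloring, and in the upper bound the crossing-pair count is sound --- the two crossing pairs partition the four edges between $\{u,w\}$ and $\{x_i,y_i\}$, so any three blue edges among them would contain a blue augmenting path, giving $d_B(u)+d_B(w)\le 2m\le 2n-2$ and hence a vertex of red degree at least $n$. There is, however, nothing in the paper to compare it against: Theorem~\ref{CL} is quoted from Cockayne and Lorimer's 1975 paper on stars versus stripes and is used as a black box (in the proof of Theorem~\ref{thm1} and in Lemma~\ref{lemz} and Case~\ref{case1}), with no proof supplied. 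Relative to the cited source, note that Cockayne and Lorimer treat the general off-diagonal numbers $R(K_{1,n},mK_2)$, whereas your argument is tailored to the diagonal case $m=n$; what your version buys is a short, self-contained derivation via maximum matchings and augmenting paths, and your parenthetical Dirac-type alternative (no red $K_{1,n}$ forces blue minimum degree $n$ on $2n$ vertices, hence a blue Hamiltonian cycle and a blue perfect matching for $n\ge2$, the case $n=1$ being trivial) would also work.
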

\begin{thm}[\cite{LL}]\label{LL} 
Let $m$ and $n$ be positive integers. Then $R(F_m,nK_2)=max\{m,n\}+m+n$.
\end{thm}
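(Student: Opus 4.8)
The plan is to prove the two bounds separately, writing $N_0=\max\{m,n\}+m+n$ and handling the cases $m\ge n$ and $n\ge m$ in parallel. For the lower bound I would exhibit, on $N_0-1$ vertices, a coloring with no red $F_m$ and no blue $nK_2$. When $m\ge n$ the graph $K_{2m+n-1}$ is colored by choosing a set $A$ of $n-1$ vertices, coloring blue every edge meeting $A$ and red every edge inside $V\setminus A$: the blue graph is covered by $A$, so its matching number is at most $n-1$ and there is no blue $nK_2$, while the red graph is exactly $K_{2m}$, which has too few vertices to contain the $(2m+1)$-vertex fan $F_m$. When $n\ge m$ I would split the $2n+m-1$ vertices of $K_{2n+m-1}$ into a blue clique $X$ of order $2n-1$ and a set $Y$ of order $m$, coloring red all edges inside $Y$ and all edges between $X$ and $Y$; again the blue matching number is at most $n-1$, and one checks directly that no vertex has a red matching of size $m$ in its red neighborhood (a vertex of $X$ sees only the $m$ vertices of $Y$, and every red edge in the red neighborhood of a vertex of $Y$ must use one of the $m-1$ remaining vertices of $Y$), so there is no red $F_m$. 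This gives $R(F_m,nK_2)\ge N_0$ in both cases.

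For the upper bound, fix a coloring of $K_{N_0}$ with no blue $nK_2$ and let $M$ be a maximum blue matching with $|M|=k\le n-1$. Write $U$ for the $2k$ matched vertices and $W=V\setminus U$, so $|W|=N_0-2k$. Since $M$ is maximum, no blue edge joins two vertices of $W$, hence $W$ induces a red clique. If $|W|\ge 2m+1$ we finish at once, as any red clique on $2m+1$ vertices contains $F_m$; so the substantive case is $|W|\le 2m$, forcing $k$ close to $n-1$. The key structural step, which I would extract from the absence of a blue augmenting path, is that for each edge $a_ib_i\in M$ either one endpoint is red-adjacent to all of $W$, or both endpoints are red-adjacent to all but one common vertex of $W$; in particular each matching edge has a representative red-joined to all but at most one vertex of $W$.

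Fixing a center $v\in W$, I would then assemble a red matching of size $m$ inside the red neighborhood of $v$ by combining red edges from the representatives into $W\setminus\{v\}$ with internal red edges of the clique $W\setminus\{v\}$. Choosing a system of distinct $W$-vertices that absorbs the representatives is exactly a bipartite matching condition, which I would verify with Hall's theorem (Theorem \ref{HH}), selecting $v$ to be the exceptional vertex when the representatives happen to concentrate there. A direct count, split according to whether $k\le|W|-1$, then shows the red matching has size at least $\min\{\,|W|-1,\ \lfloor(N_0-k-1)/2\rfloor\,\}\ge m$, using $k\le n-1$ and $N_0=\max\{m,n\}+m+n$; whether the bottleneck is the number $k$ of representatives or the order $|W|-1$ of the clique, the bound $m$ is reached. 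Together with the center $v$ this produces a red $F_m$, completing $R(F_m,nK_2)\le N_0$.

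I expect the main obstacle to be the upper bound, and within it the careful verification of Hall's condition in the regime where $|W|$ is small but $k$ is large (that is, when $n$ greatly exceeds $m$): there the fan must be built almost entirely from matched vertices, and one must rule out all representatives failing at the same vertex of $W$ by promoting that vertex to the center $v$. The augmenting-path structural claim is precisely what enables this maneuver, so formulating that claim in exactly the right form is the crux of the argument.
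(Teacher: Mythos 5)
First, a point of comparison: the paper does not prove this statement at all --- Theorem \ref{LL} is quoted from Lin and Li \cite{LL} and used as a black box in the proof of Theorem \ref{thm2}. So your proposal can only be judged on its own merits. Your lower-bound constructions are correct in both cases, and the upper-bound skeleton is sound: taking a maximum blue matching $M$ with $|M|=k\le n-1$, the set $W=V\setminus V(M)$ induces a red clique with $|W|=N_0-2k\ge m+2$, the case $|W|\ge 2m+1$ is immediate, and the augmenting-path claim (each edge of $M$ either has an endpoint red-adjacent to all of $W$, or both endpoints are red-adjacent to all but one \emph{common} vertex of $W$) is exactly right.

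The genuine gap is the maneuver you yourself flag as the crux, which is backwards. In the two-endpoint case, maximality forces $N_B(a)\cap W=N_B(b)\cap W=\{f\}$, so the exceptional vertex $f$ is joined to \emph{both} endpoints in blue. Hence if all representatives fail at the same vertex $f$ and you promote $f$ to be the center $v$, then $v$ is red-adjacent to no representative whatsoever, and the fan must be built inside the red clique $W\setminus\{v\}$ alone; in the operative regime $|W|\le 2m$ this yields at most $\lfloor(|W|-1)/2\rfloor\le m-1$ red edges, so the construction fails precisely in the scenario the promotion was designed to handle. The repair goes the opposite way: choose $v\in W$ to be exceptional for as \emph{few} representatives as possible, say $c_v$ of them. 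When $m\ge n$ one has $k\le n-1<|W|$, so pigeonhole gives $c_v=0$; when $n>m$ averaging gives $c_v\le\lfloor k/|W|\rfloor$, which one checks is at most $2n-m-k-1$ using $|W|\ge m+2$ and $n\ge m+1$. Every usable representative is then red-adjacent to $v$ and to at least $|W|-2$ vertices of $W\setminus\{v\}$, so a greedy matching already gives $t\ge\min\{k-c_v,\,|W|-2\}$ representative edges (no delicate Hall analysis is needed), and the count $t+\lfloor(|W|-1-t)/2\rfloor\ge\min\{|W|-2,\,\lfloor(N_0-k-c_v-1)/2\rfloor\}\ge m$ closes both branches. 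So your architecture is correct and completable, but as written the key step would fail.
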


Now, we start to prove Theorem \ref{thm1}. For $n\ge1$ is odd ($resp.$, even), we first take a complete graph $H$ with $2n-1$ ($resp.$, $2n-2$) vertices in which each vertex has $n-1$ red neighbors and $n-1$ ($resp.$, $n-2$) blue neighbors, then let $G^{l}$ be a complete graph obtained from the join of a red $K_n$ and the graph $H$, and all the edges between them are colored with blue. Then, $|G^l|=3n-1-\varepsilon$, where $\varepsilon=0$ if $n$ is odd and $\varepsilon=1$ if $n$ is even. Since each vertex of $G^{l}$ has $n-1$ red neighbors and $2n-1$ ($resp.$, $2n-2$) blue neighbors, $G^{l}$ contains neither a red $K_{1,n}$ nor blue $F_n$.  Therefore, $R(K_{1,n},F_n)\ge3n-\varepsilon$ for $n\ge1$.

 Now, we will show that $R(K_{1,n},F_n)\le3n-\varepsilon$. For all $n\ge1$, let $G$ be a complete graph with $3n-\varepsilon$ vertices such that the edges of $G$ are colored by red and blue, where $\varepsilon=0$ if $n$ is odd and $\varepsilon=1$ if $n$ is even. Suppose that $G$ contains neither red $K_{1,n}$ nor blue $F_n$. If $n$ is odd, then by Theorem \ref{CL}, we see that $d_B(v)\le 2n-1$ for any $v\in V(G)$, implying that $d_R(v)\ge n$, which gives us a red $K_{1,n}$, a contradiction. If $n$ is even, then $|V(G)|=3n-1$ is odd. To avoid a red $K_{1,n}$, we see that $d_B(v)\ge 2n-1$ for any $v\in V(G)$. As both $|V(G)|$  and $2n-1$ are odd, there exists a vertex $u\in V(G)$ such that $d_B(u)\ge2n$. Thus by Theorem \ref{CL}, we can obtain a blue $F_n$ with center $u$, a contradiction. Therefore, Theorem \ref{thm1} follows.
 \vskip 0.2cm
 {\bf Remark.} The extremal graph $G^l$ on $3n-1-\varepsilon$ vertices without red $K_{1,n}$ and blue $F_n$ is constructed for any $n\ge 1$ in the proof above. Noting that $3n-1-\varepsilon$ is even while $n$ is even or odd, we can construct extremal graphs other than $G^l$.
 
 \section{Proof of Theorem \ref{thm2} }
 We first show two lemmas which play very important role in the proof of Theorem \ref{thm2}.

\begin{lem}\label{lemz} 
Let $G$ be a complete graph with 14 vertices such that the edges of $G$ are colored by red and blue without monochromatic copy of $F_3$. Then $G$ contains no monochromatic copy of $K_4+2K_1$.
\end{lem}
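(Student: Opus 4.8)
The plan is to argue by contradiction. Suppose $G$ is a $2$-coloring of $K_{14}$ with no monochromatic $F_3$ that nevertheless contains a monochromatic $K_4+2K_1$. Since both the hypothesis and the conclusion are symmetric in the two colors, I may assume this copy is blue. Write $A=\{v_1,v_2,v_3,v_4\}$ for the $K_4$-part and $u_1,u_2$ for the two vertices of the $2K_1$, and observe first the useful reframing $K_4+2K_1\cong K_6-e$: the set $W=A\cup\{u_1,u_2\}$ carries all but one of the $\binom{6}{2}=15$ edges in blue, the only unconstrained edge being $u_1u_2$. Let $Z=V(G)\less W$ be the remaining $8$ vertices. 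I would split on the color of $u_1u_2$: if it is blue then $W$ induces a blue $K_6$, and if it is red then $W$ induces a blue $K_6-e$. In either case $W\less\{v_i\}$ induces a blue $K_5$ or $K_5-e$ for each $v_i\in A$.

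The key local step is to show that each $z\in Z$ sends very few blue edges into $W$. Concretely, I would prove that every $z\in Z$ either has at most one blue neighbor in $W$, or $u_1u_2$ is red and the blue neighbors of $z$ in $W$ are exactly $\{u_1,u_2\}$; in all cases $z$ has at least four red neighbors in $W$, and in the case $u_1u_2$ blue at least five. The mechanism is uniform: suppose $z$ is blue-adjacent to two vertices of $W$, at least one of which, say $v_i$, lies in $A$. Centering at $v_i$, the set $W\less\{v_i\}$ (a blue $K_5$ or $K_5-e$) has a blue near-perfect matching avoiding any one prescribed vertex, so I can choose the blue $2$-matching of $W\less\{v_i\}$ that misses the other blue neighbor $w'$ of $z$ and adjoin the blue edge $zw'$; since $v_i$ is blue to all of $W\less\{v_i\}$ and to $z$, this produces a blue $3K_2$ inside $N_B(v_i)$, i.e.\ a blue $F_3$ centered at $v_i$, a contradiction. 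The only configuration escaping this is $z$ blue-adjacent solely to $\{u_1,u_2\}$ with $u_1u_2$ red (so that no $v_i$ is available as center and $u_2\notin N_B(u_1)$); one checks that such a $z$ must be red to all of $A$, for otherwise centering at the relevant $v_i$ again yields a blue $3K_2$.

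Counting then forces a vertex of large red degree. Each $z\in Z$ has at least three red neighbors in $A$, so there are at least $24$ red edges between $Z$ and $A$; as $|A|=4$, some $v_i\in A$ has at least six red neighbors in $Z$, and because $v_i$ is blue to all of $W\less\{v_i\}$ its entire red neighborhood $Z'=N_R(v_i)$ lies in $Z$. Applying Theorem \ref{CL} ($R(K_{1,3},3K_2)=6$) with the colors interchanged to the $2$-colored clique on $Z'$, either $Z'$ contains a red $3K_2$ — which with the center $v_i$ is a red $F_3$, a contradiction — or every such neighborhood has red matching number at most $2$, forcing its red edges onto a small cover and hence a blue clique inside $Z$. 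In parallel, the same local step shows that the sets $T_i=N_B(u_i)\cap Z$ are red cliques in $Z$, while each apex $u_i$ is red-adjacent to all of $Z\less T_i$, so the apexes too carry large red neighborhoods in $Z$.

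The remaining task, which I expect to be the main obstacle, is to upgrade these forced blue cliques and red neighborhoods into an actual monochromatic $F_3$. The difficulty is that the emerging structure is self-reinforcing: a blue clique of size $6$ inside $Z$ is itself a blue $K_6$, and the local step applies to it verbatim, so naive iteration merely reproduces the same configuration on fewer vertices rather than closing the argument. I would therefore finish by a careful case analysis balancing the two regimes — when some $T_i$ is large (a big red clique in $Z$ that pushes toward a red $F_3$) against when all $T_i$ are small (many red $W$–$Z$ edges that push some red neighborhood past the threshold where its blue complement must host a spanning $F_3$) — using at each stage the principle that a blue $K_5$ already present in $W$ or in $Z$ needs only one further disjoint blue edge in a common blue neighborhood to complete an $F_3$, and closing off the residual cases by a global tally of the $91$ edges of $K_{14}$ against the blue edges forced inside $W$, inside $Z$, and across. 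The crux is to show that only $14$ vertices cannot simultaneously accommodate the dense blue structure on $W$ and the red-matching constraints that the absence of a red $F_3$ imposes.
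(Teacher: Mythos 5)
Your opening is sound and is, in substance, identical to the paper's first step: viewing the blue $K_4+2K_1$ as $K_6-e$, your local matching argument shows that every $z\in Z$ has at most one blue neighbor in the blue $K_5$ $W_5=A\cup\{u_1\}$ (equivalently, at least four red neighbors there). But after that the proposal never closes, and you concede this yourself (``the remaining task, which I expect to be the main obstacle\ldots''). The missing idea is the one that makes the paper's proof short: use Hall's theorem to forbid \emph{every} red $K_{1,3}$ inside $Z$, not merely red $3K_2$'s inside the neighborhoods $N_R(v_i)$. If some $w\in Z$ had three red neighbors $x,y,z\in Z$, then $|N_R(w)\cap W_5|\ge 4$, while each of $x,y,z$ has at most one blue neighbor in $W_5$ and hence at least three red neighbors inside $N_R(w)\cap W_5$; Hall's condition is then trivial, so $x,y,z$ can be matched by disjoint red edges into $N_R(w)\cap W_5$, and these three edges together with $w$ form a red $F_3$, a contradiction. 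Consequently the red graph on $Z$ has maximum degree at most $2$. Moreover, a vertex of $Z$ with at most one red neighbor in $Z$ would have six blue neighbors in $Z$, and since no red $K_{1,3}$ exists in $G[Z]$, Theorem \ref{CL} ($R(K_{1,3},3K_2)=6$) would produce a blue $3K_2$ among them and hence a blue $F_3$ centered at that vertex. So the red graph on $Z$ is exactly $2$-regular: a disjoint union of cycles $C_8$, $2C_4$ or $C_5\cup C_3$.

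This $2$-factor is what the endgame needs, and it replaces your ``balancing of regimes'' and ``global tally of the $91$ edges'' entirely: any $7$ of the $8$ vertices of $Z$ contain a red $3K_2$, so each vertex of $W_5$ must have at least two blue neighbors in $Z$ (otherwise it is red-adjacent to at least $7$ vertices of $Z$ and becomes the center of a red $F_3$); that forces at least $10$ blue edges between $W_5$ and $Z$, and since $|Z|=8$, some $z\in Z$ has two blue neighbors in $W_5$, contradicting the local step you already proved. By contrast, the route you sketch does not obviously recover: from ``no red $3K_2$ in $N_R(v_i)$'' one only gets a red vertex cover of that $6$-set of size at most $4$, so the blue clique you extract may have merely $2$ vertices---far too weak to drive a contradiction---and, as you yourself observe, iterating the local step on blue cliques only reproduces the hypothesis on a smaller set. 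So the proposal has a genuine gap: its first half is correct and matches the paper, but it lacks the Hall's-theorem exclusion of red $K_{1,3}$'s in $Z$, the resulting $2$-regularity, and the final double count that turn the local step into a contradiction.
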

\begin{proof}
Without loss of generality, suppose that $G$ contains a blue $H=K_4+2K_1$. Set $V(H)=\{u_1,\ldots,u_6\}$, $V(2K_1)=\{u_1,u_6\}$ and $K=V(G)-V(H)=\{v_1,\dots,v_8\}$. Then we see that $|N_R(v_i)\cap \{u_1,\ldots,u_5\}|\ge4$ for $i\in[8]$. If $G[K]$ contains a red $K_{1,3}$, set $K_{1,3}=\{w:x,y,z\}$ with center $w$, then by Hall's theorem, there exists a red 
$3K_2$ between $N_R(w)\cap \{u_1,\ldots,u_5\}$ and $\{x,y,z\}$, which leads to a red  $F_3$ with center $w$ in $G[V(K_{1,3})\cup \{u_1,\ldots, u_5\}]$, a contradiction. It follows that $|N_R(v_i)\cap K|\le2$ for $i\in[8]$. If there is some $i\in [8]$, say $i=1$, such that $|N_R(v_1)\cap K|\le1$ and $\{v_3,\dots,v_8\}\subseteq N_B(v_1)$. Then by Theorem \ref{CL}, there is a blue $3K_2$ in $G[\{v_3,\dots,v_8\}]$, which forms a blue $F_3$ together with $v_1$, a contradiction. Therefore, we may assume that $|N_R(v_i)\cap K|=2$ for $i\in[8]$. Thus, $G[K]$ has a red $2$-factor consisting of a red $C_8$ or $2C_4$ or $C_5\cup C_3$. Since $G$ has no red $F_3$, we have $|N_B(u_i)\cap K|\ge 2$ for $i\in[6]$. Since $|K|=8$, there exists a vertex $v' \in K$ such that $|N_B(v')\cap \{u_1,\dots,u_5\}|\ge2$, which leads to a blue $F_3$ with center in $\{u_1,\ldots,u_5\}$, yielding a contradiction. Therefore, Lemma \ref{lemz} holds.
\end{proof}

\begin{lem}\label{lemzz}
Let $G$ be a complete graph with 14 vertices such that the edges of $G$ are colored by blue and red without monochromatic copy of $F_3$. If $d_B(v)\le7$ and $d_R(v)\le7$ for any $v\in V(G)$, then $G$ contains no monochromatic copy of $K_5$. 
\end{lem}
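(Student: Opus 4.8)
The plan is to argue by contradiction, exploiting the symmetry of the hypotheses in the two colours. Suppose $G$ has a monochromatic $K_5$; since both the colour-degree bounds and the $F_3$-freeness are symmetric in red and blue, we may assume it is blue, on $U=\{u_1,\dots,u_5\}$, and put $K=V(G)-U$ with $|K|=9$. From $d_R(v)+d_B(v)=13$ together with $d_R(v),d_B(v)\le 7$ we get $d_R(v),d_B(v)\in\{6,7\}$ for every $v$. As the four edges of $u_i$ inside $U$ are blue, $u_i$ has at most $d_B(u_i)-4\le 3$ blue neighbours in $K$ and at least $d_R(u_i)\ge 6$ red neighbours, all in $K$; write $S_i=N_B(u_i)\cap K$, so $2\le|S_i|\le 3$ and $N_R(u_i)\cap K=K\setminus S_i$.

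I would first harvest several local restrictions. (i) Each $S_i$ induces a red clique: a blue edge inside $S_i$ together with a perfect matching of the blue $K_4$ on $U\setminus\{u_i\}$ would be a blue $3K_2$ in $N_B(u_i)$, i.e.\ a blue $F_3$ at $u_i$. (ii) For each $i$, the set $K\setminus S_i$ (of size $6$ or $7$) contains no red $3K_2$, since such a matching with $u_i$ would be a red $F_3$. (iii) No $w\in K$ is blue-adjacent to four or more vertices of $U$: if $w$ were blue to four of them, say $u_1,u_2,u_3,u_4$, then the blue $K_4$ on $\{u_1,u_2,u_3,u_4\}$ together with $u_5$ and $w$—each blue-adjacent to all four—contains a blue $K_4+2K_1$, contradicting Lemma~\ref{lemz}. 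Hence every $w\in K$ has at least two red neighbours in $U$. Counting then gives at least $30$ red edges between $U$ and $K$, hence at most $16$ red edges inside $K$ and therefore at least $20$ blue edges inside $K$.

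Next I would pin down a blue matching inside $K$. Applying Theorem~\ref{LL} with $m=n=3$ yields $R(F_3,3K_2)=9=|K|$, so $G[K]$ contains a red $F_3$ or a blue $3K_2$; the former is already forbidden, so there is a blue $3K_2$ on some $W=\{a_1,b_1,a_2,b_2,a_3,b_3\}\subseteq K$, with the remaining three vertices forming $Z=K\setminus W$.

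The final, and hardest, step is to upgrade this blue structure—the blue $K_5$ on $U$, the blue $3K_2$ on $W$, and the $\ge 20$ blue edges inside $K$—into a blue $F_3$, or else to produce a red $F_3$. The genuine obstacle is that restriction (iii) caps every candidate centre: no $u_i$ reaches four vertices of $W$ in blue, and for no blue $F_3$ to appear each $z\in Z$ must meet $W$ in a red edge. I expect to finish by a case analysis governed by (i)--(iii): tracking how the red cliques $S_i$ overlap, how $U$ attaches to $W$ in red, and the prohibition of a red $3K_2$ in each $K\setminus S_i$, while re-selecting the blue $3K_2$ and moving the prospective centre among $U$, $W$ and $Z$, until some blue $3K_2$ acquires a common blue neighbour (a blue $F_3$) or one of the sets $K\setminus S_i$ is forced to contain a red $3K_2$ (a red $F_3$). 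The most delicate configuration to dispose of should be the tight one in which every $u_i$ has blue degree exactly $7$, so that each $S_i$ is a red triangle and there are exactly $30$ red edges between $U$ and $K$.
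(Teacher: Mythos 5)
Your setup is sound and your preliminary observations are all correct: the symmetry reduction, the degree bounds $d_R(v),d_B(v)\in\{6,7\}$, facts (i)--(iii) (which correspond to the paper's property (1) and its use of Lemma~\ref{lemz}), the count of at least $20$ blue edges inside $K$, and the extraction of a blue $3K_2$ in $G[K]$ via Theorem~\ref{LL}. But the proof stops exactly where the difficulty begins. The final paragraph is not an argument; it is a declaration of intent (``I expect to finish by a case analysis\dots until some blue $3K_2$ acquires a common blue neighbour\dots or one of the sets $K\setminus S_i$ is forced to contain a red $3K_2$''). Nothing in what you have established shows that this process terminates in a contradiction, and the configuration space left open by (i)--(iii) plus the edge counts is large: your constraints are consistent, for instance, with many ways of distributing the blue edges of $K$ that are not obviously incompatible with $F_3$-freeness. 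A Ramsey-type lemma like this one lives or dies on that case analysis, so as written there is a genuine gap.

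For comparison, the paper's proof indicates how much work the missing part actually takes. Beyond your (i) and (iii), it needs two further structural facts: a pairwise/triple bound on $|N_B(v)\cap V(H)|$ for vertices of $K$ (its properties (3) and (4)), and, crucially, that no vertex of $K$ has \emph{exactly four} red neighbours inside $K$ (its property (5)), whose proof is itself a substantial case analysis. It then pivots on the quantity $q=\max\{|N_R(v)\cap K|:v\in K\}$, shows $q\in\{3,5\}$ (using $R(K_{1,3},F_3)=9$ from Theorem~\ref{thm1}, property (5), and Lemma~\ref{lemz}), kills $q=3$ by forcing a red $K_4+2K_1$, and kills $q=5$ by first showing that $N_R(v_1)\cap K$ induces a blue $K_5$ and then running an edge count between $N_B(v_1)\cap K$ and the rest, split into four subcases. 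If you want to salvage your approach, the parameter to organize around is this maximum red degree inside $K$ (or an equivalent extremal choice), not the blue $3K_2$, since the blue matching alone gives no leverage on where a common blue neighbour must come from.
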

\begin{proof}
Without loss of generality, suppose that $G$ contains a blue $H=K_5$. Set $V(H)=\{u_1,\dots,\\u_5\}$ and $K=V(G)-V(H)=\{v_1,\dots,v_9\}$. Let $V_k$ be a vertex set of $K$ in which each vertex is blue-adjacent to $k$ vertices of $V(H)$. By Lemma \ref{lemz}, it is easily seen that $|V_k|=0$ for all $k\ge4$. Since $d_B(v)\le7$ and $d_R(v)\le7$ for any $v\in V(G)$, we have $d_R(v)\ge 6$ and $d_B(v)\ge 6$ as $|V(G)|=14$.
We first prove the following five properties. 
\begin{itemize}
\item[(1)] $N_B(u_i)\cap K$ induces a red clique if $|N_B(u_i)\cap K|\ge 2$ for $i\in [5]$;
\item[(2)] $V_k\not=\emptyset$ for some $2\le k\le 3$;
\item[(3)] for any two vertices in $K$, say  $v_1$ and $v_2$, $|(N_B(v_1)\cup N_B(v_2))\cap V(H)|=2$ if  $|N_B(v_1)\cap N_B(v_2)\cap V(H)|\ge 1$, $|N_B(v_1)\cap V(H)|\ge 2$ and $|N_B(v_2)\cap V(H)|\ge 2$;
\item[(4)] for any three vertices, say $v_1, v_2, v_3$ in $K$, $|N_B(v_1)\cap V(H)|+|N_B(v_2)\cap V(H)|+|N_B(v_3)\cap V(H)|\le 7$;
\item[(5)] $|N_R(v_i)\cap K|\not=4$ for $i\in [9]$.
\end{itemize}
\begin{proof}
For (1), if there is a blue edge in $N_B(u_i)\cap K$ for some $i\in [5]$, then $G$ contains a blue $F_3$ with center in $V(H)$, a contradiction. 

Since $|N_B(v)\cap K|\ge 2$ for any $v\in V(H)$, (2) holds by $|K|=9$. 

For (3), if $|(N_B(v_1)\cup N_B(v_2))\cap V(H)|\ge 3$, we can easily derive that $V(H)\cup \{v_1,v_2\}$ induces a blue $F_3$ as $|N_B(v_1)\cap N_B(v_2)\cap V(H)|\ge 1$, a contradiction and thus (3) follows. 

For (4), since $|V_3|\le1$  by (3) and  $|V_k|=0$ for all $k\ge4$, (4) holds.

For (5), suppose to the contrary that $|N_R(v_1)\cap K|=4$ and $\{v_2,\dots,v_5\}\subseteq N_R(v_1)$. Set $C=\{v_2,\dots,v_5\}$ and $S=\{v_6,\dots,v_9\}$. Then $S\subseteq N_B(v_1)$. Since $d_R(v_1)\le 7$ and $d_B(v_1)\le 7$, we have $2\le |N_R(v_1)\cap V(H)|\le 3$. 
If $|N_R(v_1)\cap V(H)|=3$,  let $\{u_3,u_4,u_5\}\in N_R(v_1)$, then $u_iv_1\in B$ and $u_iv_j\in R$ by (1) for $i\in [2]$ and $u_j\in S$. Since $d_B(u_1)\ge  6$, there is a vertex in $C$, say $v_5$, such that $v_5u_1\in B$. By (3), $v_5u_j\in R$ for $j=3,4,5$ and by (4) there are a vertex in  $\{u_3,u_4,u_5\}$, say $u_3$ and a vertex in $\{v_2,v_3,v_4\}$, say $v_4$ such that $u_3v_4\in R$. Thus, $v_iu_4\in B$ for $i=2,3$ to  to avoid a red $F_3$ with center $v_1$. Hence, $v_2v_3\in R$ by (1), which leads to a red $F_3$ in $G[\{v_1,\ldots,v_5\}\cup \{u_3,u_5\}]$, a contradiction.

If $|N_R(v_1)\cap V(H)|=2$, let $u_4v_1,u_5v_1\in R$ and $u_iv_1\in B$ for $i\in [3]$. Since $S\subseteq N_B(v_1)$, we have $S\subseteq N_R(u_i)$ for $i\in [3]$ by (1). To avoid a red $F_3$ with center in $S$,  $G[S]$ contains no red $K_{1,3}$. Moreover, to avoid a blue $F_3$ with center $v_1$, $G[S]$ contains no blue $2K_2$ as well. Therefore, $G[S]$ consists of a red $C_3$ and a blue $K_{1,3}$. Without loss of generality, we may assume that $V(C_3)=\{v_6,v_7,v_8\}$ and $v_6v_9,v_7v_9,v_8v_9\in B$. Since $d_B(u_i)\ge 6$ for $i\in [3]$, we may assume that $u_1v_2,u_2v_3,u_3v_4\in B$ by (3). Then $\{v_2,v_3,v_4\}$ is red-adjacent to $\{u_4,u_5\}$ by (3) and to avoid a red $F_3$ with center $v_1$, we have   $v_2v_5,v_3v_5,v_4v_5\in B$. To avoid a blue $F_3$ with center $v_5$, $N_R(v_j)\cap C\not=\emptyset$ for some $6\le j\le 8$. Without loss of generality, assume that $v_6v_i\in R$, where $v_i\in C$. Then by (3), there is a vertex in $\{u_1,u_2,u_3\}$, say $u_1$, such that $u_1v_i\in R$. Noting that $G[\{v_6,v_7,v_8\}]$ is a red triangle and $S\subseteq N_R(u_j)$ for $j\in [3]$, we can get a red $F_3$ with center $v_6$, a contradiction.
\end{proof}

Now, we turn to prove Lemma \ref{lemzz}. Let  $q=\max\{|N_R(v)\cap K|: v\in K\}$. Without loss of generality, we may assume that $|N_R(v_1)\cap K|=q$. Then $q\ge 3$ by applying Theorem \ref{thm1} to $n=3$ as there is no blue $F_3$ in $G[K]$, $q\not=4$ by (5) and $q\le 5$ as $|N_R(v_1)\cap V(H)|\ge 2$ by Lemma \ref{lemz}  and $d_R(v_1)\le 7$. If $q=3$, then $V_k=\emptyset$ for $k\ge 3$ as $d_R(v)\ge6$ for any $v\in V(G)$. Furthermore, by (2), we see that $V_2\neq\emptyset$, say $v_1\in V_2$. Let $N_R(v_1)\cap K=\{v_2,v_3,v_4\}$, $T=N_B(v_1)\cap K=\{v_5,\ldots,v_9\}$, $U_1=N_B(v_1)\cap V(H)=\{u_1,u_2\}$ and $U_2=N_R(v_1)\cap V(H)=\{u_3,u_4,u_5\}$.  Then  $U_1$ is red-adjacent to $T$ by (1). To avoid a blue $F_3$ with center $v_1$, $G[T]$ contains no blue $2K_2$, implying $d_{B[T]}(v)\le 1$ for some $v\in T$.  Since $|T|\ge 5$, there exists a red $K_{1,3}$ in $G[T]$ with center, say $v_5$. Let $v_iv_5\in R$ for $i=6,7,8$. As $q=3$ and $d_R(v_5)\ge 6$, $N_R(v_5)\cap \{u_3,u_4,u_5\}\not=\emptyset$, say $v_5u_3\in R$. To avoid a red $F_3$ with center $v_5$ in $G[\{v_5,\ldots,v_8\}\cup\{u_1,u_2,u_3\}]$, we have $v_iu_3\in B$ for $i=6,7,8$. Thus by (1), $G[\{v_5,v_6,v_7,v_8\}]$ is a red $K_4$, which will lead to a red $K_4+2K_1$ together with $\{u_1,u_2\}$, contrary to Lemma \ref{lemz}. Therefore, we may conclude that $q=5$.
 
Similar to the above discussion, let $S=N_R(v_1)\cap K=\{v_2,\ldots,v_6\}$, $T=N_B(v_1)\cap K=\{v_7,v_8,v_9\}$, $U_1=N_B(v_1)\cap V(H)=\{u_1,u_2,u_3\}$ and $U_2=N_R(v_1)\cap V(H)=\{u_4,u_5\}$.  Then  $U_1$ is red-adjacent to $T$ by (1). We first show that $G[S]$ is a blue $K_5$. Suppose there is a red edge, say $v_5v_6$, in $G[S]$. If there is a red edge between $U_2$ and $\{v_2,v_3, v_4\}$, say $v_4u_4$, then $\{v_2,v_3\}\subseteq N_B(u_5)$, which implies $v_2v_3\in R$, resulting in a red $F_3$ with center $v_1$, a contradiction. If $U_2$ is blue-adjacent to $\{v_2,v_3, v_4\}$, then $G[\{v_2,v_3, v_4\}]$ is a red clique by (1). Noting that $d_B(u_j)\le 7$ for $j=4,5$, we see that $U_2$ is red-adjacent to $\{v_5,v_6\}$, which again leads to a red $F_3$ with center $v_1$, a contradiction. Therefore, $S$ induces a blue $K_5$.

If $T$ induces a blue $K_3$, then by (1), there are at most 7 blue edges between $T$ and $U_2\cup S$. On the other hand, since $d_R(v_j)\le 7$ for $j=7,8,9$, there are at most $3\times 4=12$ red edges between $T$ and $U_2\cup S$. Thus there are at most $12+7=19<21$ edges between them,  yielding a contradiction.
 
If $G[T]$  contains only one red edge, say $v_7v_8$, then by (1) and (4), there are at most $7+4=11$ blue edges between $T$ and $U_2\cup S$. As $d_R(v_j)\le 7$ for $j=7,8,9$, there are at most $4+3\times 2=10$ red edges between $T$ and $U_2\cup S$. Since there are $3\times 7=21$ edges between $T$ and $U_2\cup S$, $\{v_7,v_8\}$ has to be blue-adjacent to $U_2$, implying $|N_R(v_7)\cap (V(G)-S)|=4$, contrary to (5) as $S$ induces a blue $K_5$. 
  
If $G[T]$ contains  two red edges,  then by (1) and (4), there are at most $7+4=11$ blue edges between $T$ and $U_2\cup S$. Again, as $d_R(v_j)\le 7$ for $j=7,8,9$, there are at most $3\times 2+2=8$ red edges between $T$ and $U_2\cup S$. Thus there are at most $11+8=19<21$ edges between them,  a contradiction.
  
   Finally, if $S$ induces a red $K_3$, then by (4), there are at most $7+6=13$ blue edges between $T$ and $U_2\cup S$. Similar to the above discussion, there are at most $3\times 2=6$ red edges between $T$ and $U_2\cup S$. Thus there are at most $13+6=19<21$ edges between them,  a contradiction. 
   
   Therefore, the proof of Lemma \ref{lemzz} is complete.
\end{proof}

The following result provides a lower bound for $R_2(F_n)$ when $n$ is odd.

\begin{prop}\label{prop}
$R_2(F_n)\ge 4n+2$ for odd $n\ge1$.
\end{prop}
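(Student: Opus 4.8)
The plan is to establish the lower bound $R_2(F_n)\ge 4n+2$ by exhibiting a red-blue edge-coloring of $K_{4n+1}$ that contains no monochromatic $F_n$. The natural candidate is a symmetric, highly structured coloring. Since $F_n=\{v\}+nK_2$ requires a center vertex joined monochromatically to $n$ disjoint edges of the same color, I would try to build a coloring in which every vertex has controlled monochromatic neighborhood structure. First I would partition the $4n+1$ vertices into two blocks and color edges so that within each block one color appears and between blocks the other color appears, or else use a self-complementary circulant construction on $4n+1$ vertices where each vertex has exactly $2n$ red and $2n$ blue neighbors.

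The key observation to exploit is that avoiding a monochromatic $F_n$ with center $v$ in color $c$ amounts to ensuring that the subgraph induced in color $c$ on $N_c(v)$ contains no matching of size $n$ (equivalently, no $nK_2$). By Theorem \ref{CL} we know $R(K_{1,n},nK_2)=2n$, and by Theorem \ref{LL} we have $R(F_m,nK_2)=\max\{m,n\}+m+n$; these matching-threshold results tell us precisely how large a monochromatic neighborhood can be before a monochromatic matching of size $n$ is forced. So I would design the construction so that for each vertex and each color, the induced monochromatic subgraph on that neighborhood has a small maximum matching — for instance by making each monochromatic neighborhood class split into a clique of the opposite color plus a few vertices, thereby capping the matching number below $n$. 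With $2n$ monochromatic neighbors in each color, the construction must keep the monochromatic matching inside each neighborhood strictly below $n$.

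The concrete construction I would propose: take the red graph $R$ to be $K_{2n}\cup K_{2n+1}$ (two disjoint red cliques of sizes $2n$ and $2n+1$), so that the blue graph $B$ is the complete bipartite graph $K_{2n,2n+1}$ together with no blue edges inside the parts. For a red center $v$ lying in the part of size $2n$, its red neighborhood has only $2n-1$ vertices, and since $R(K_{1,n},nK_2)=2n$ a red $nK_2$ inside a set of $2n-1$ red-clique vertices is blocked only if we check the matching number carefully — the matching number of $K_{2n-1}$ is $n-1<n$, so no red $F_n$ centered there. For a red center in the part of size $2n+1$, the red neighborhood is $K_{2n}$ with matching number $n$, which would give a red $F_n$, so this precise split fails and I would instead use parts of sizes $2n$ and $2n+1$ with the smaller red clique, or make both red cliques have even order at most $2n$. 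The right choice is red $=K_{2n}\cup K_{2n+1}$ examined through the odd-order clique: in $K_{2n+1}$ the matching number is $n$, again giving $F_n$. This shows the disjoint-clique construction must be refined.

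The main obstacle, as this analysis reveals, is simultaneously defeating $F_n$ in both colors when $4n+1$ is odd: the parity of $4n+1$ forces some vertex to have a monochromatic degree of at least $2n+1$ in one color, and a monochromatic neighborhood of size $2n$ typically already contains an $nK_2$. The decisive idea, which I expect to carry the proof, is that oddness of $n$ is exactly what permits a construction: one can use a blow-up of a small self-complementary pattern, or take the red graph to be $(2n+1)$ copies arranged as a $C_5$-type blow-up, so that each monochromatic neighborhood induces a graph whose every component is an odd clique of order at most $2n-1$, forcing matching number at most $n-1$. I would verify, using $R(K_{1,n},nK_2)=2n$ together with a direct matching-number count in each monochromatic neighborhood, that neither color contains $nK_2$ in the neighborhood of any vertex, hence no monochromatic $F_n$, establishing $R_2(F_n)\ge 4n+2$ for odd $n$.
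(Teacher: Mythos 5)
There is a genuine gap: your proposal never actually produces a valid coloring, so the lower bound is not established. The only construction you fully specify (red graph $K_{2n}\cup K_{2n+1}$, blue graph complete bipartite) is one you yourself correctly show fails, and the final paragraph replaces it with a gesture --- ``a blow-up of a small self-complementary pattern'' or ``$(2n+1)$ copies arranged as a $C_5$-type blow-up'' --- without saying which vertices are blown up, to what sizes, which color is placed inside each block, or how the blocks must be positioned relative to one another. Worse, the verification principle you say would finish the proof is false: it is not true that if every component of a monochromatic neighborhood is an odd clique of order at most $2n-1$ then the matching number is at most $n-1$ (take $K_{2n-1}\cup K_3$: both components are odd cliques of order at most $2n-1$, yet together they contain a matching of size $n$). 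So even if you had named a concrete blow-up, the argument you planned to run on it would not be sound as stated.

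For comparison, the paper's construction is the $C_5$ blow-up made precise. Start from the unique triangle-free $2$-coloring of $K_5$ (red $C_5$ and blue $C_5$), and replace four of the five vertices by cliques of order $n$: two \emph{red} cliques placed at positions that are \emph{not} adjacent in red, and two \emph{blue} cliques placed at positions that are not adjacent in blue; the fifth vertex is kept as a single vertex, giving $|V(G)|=4n+1$. The verification does not reduce to ``all components are small odd cliques.'' For a vertex $v$ inside a red block, every red edge of the red graph induced on $N_R(v)$ is incident with one of the $n-1$ remaining vertices of $v$'s own block (the red-adjacent blocks are a blue clique and the single vertex, with no red edges among themselves), so that red graph has a vertex cover of size $n-1$ and hence matching number at most $n-1$; here parity plays no role. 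Parity enters at exactly one place: the red neighborhood of the unexpanded vertex induces $K_n\cup K_n$ (the two red blocks, joined to each other in blue), whose matching number is $2\lfloor n/2\rfloor=n-1<n$ precisely because $n$ is odd. The symmetric statements hold in blue. Your proposal correctly senses that oddness of $n$ is the crux and that a $C_5$-type blow-up is the right family to look in, but without the explicit placement rule and the correct two-part matching count, the proof is not there.
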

\begin{proof}
Let $H$ be a complete graph with 5 vertices such that the edges of $H$ are colored by red and blue without monochromatic copy of triangle. For odd $n\ge1$, let $G$ be obtained by replacing four vertices of $H$ with two red $H_1=K_n$ and two blue $H_2=K_n$ such that red ($resp.$, blue) $K_n$ is not red ($resp.$, blue)-adjacent to red ($resp.$, blue) $K_n$ (see Fig. \ref{t1}. The solid lines are colored with red and the dashed lines are colored with blue). Clearly, $G$ contains no monochromatic copy of $F_n$ for odd $n\ge1$. Thus the statement follows.
\end{proof}
\begin{figure}[htp]
\begin{center}
	\def\r{4pt}
	\def\dy{1cm}
	\tikzset{c/.style={draw,circle,fill=black,minimum size=.2cm,inner sep=0pt,anchor=center},
		d/.style={draw,circle,fill=white,minimum size=1cm,inner sep=0pt, anchor=center}}
		\begin{tikzpicture}[font=\large]
		\pgfmathtruncatemacro{\Ncorners}{5}
		\node[draw, regular polygon,regular polygon sides=\Ncorners,minimum size=3cm] 
		(poly\Ncorners) {};
		\node[regular polygon,regular polygon sides=\Ncorners,minimum size=3.5cm] 
		(outerpoly\Ncorners) {};
		\foreach\x in {1}{
			\node[d] (poly\Ncorners-\x) at (poly\Ncorners.corner \x){$H_1$};
		}	
		\foreach\x in {4}{
			\node[d] (poly\Ncorners-\x) at (poly\Ncorners.corner \x){$H_1$};
		}	
		\foreach\x in {2,3}{
			\node[d] (poly\Ncorners-\x) at (poly\Ncorners.corner \x){$H_2$};
			\node[d] (poly\Ncorners-\Ncorners) at (poly\Ncorners.corner \Ncorners){$\bullet$};
		}
		\foreach\X in {1,...,\Ncorners}
		{\foreach\Y in {1,...,\Ncorners}{
				\pgfmathtruncatemacro{\Z}{abs(mod(abs(\Ncorners+\X-\Y),\Ncorners)-2)}
				\ifnum\Z=0
				\draw [line width=2mm, dashed](poly\Ncorners-\X) -- (poly\Ncorners-\Y);
				\fi}}
		\foreach\X in {1,...,\Ncorners}
		{\foreach\Y in {1,...,\Ncorners}{
				\pgfmathtruncatemacro{\G}{abs(mod(abs(\Ncorners+\X-\Y),\Ncorners)-4)}
				\ifnum\G=0
				\draw [line width=2.5mm](poly\Ncorners-\X) -- (poly\Ncorners-\Y);
				\fi}}
		\end{tikzpicture}
\end{center}
\caption{A construction without a monochromatic copy of $F_n$ for odd $n\ge1$.}
\label{t1}
\end{figure}
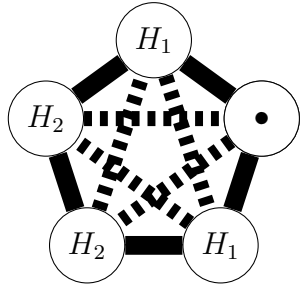
\textbf{Proof of Theorem \ref{thm2}.}  By Proposition \ref{prop}, it suffices to show that  $R_2(F_3)\le14$. Let $G$ be a complete graph with 14 vertices such that the edges of $G$ are colored by red and blue. Suppose that $G$ contains no monochromatic copy of $F_3$.  Let $m=max\{d_R(v), d_B(v)\}$ for any $v\in V(G)$. Then by Theorem \ref{LL}, we have $8\ge m\ge7$. Without loss of generality, we may assume that $u$ is a vertex in $V(G)$ such that $d_R(u)=m$. We distinguish two cases.
\begin{case}\label{case1}
$d_R(u)=8$.
\end{case}
Let $M_r$ denote the maximum red matching in $G[N_R(u)]$. Clearly, $1\le|M_r|\le2$. Furthermore, by Lemma \ref{lemz}, we see that $|M_r|=2$. Set $N_R(u)=\{u_1,\dots,u_8\}$. Without loss of generality, we may assume that $u_1u_2,u_3u_4\in M_r$. Then $\{u_5,\ldots,u_8\}$ induces a blue $K_4$.  If there exists a red $2K_2$ between $M_r$ and $\{u_5,\ldots,u_8\}$,  we may assume that $u_1u_5, u_3u_6\in R$ as $|M_r|=2$, Then $u_2$ and $u_4$ are blue-adjacent to $\{u_4,u_6,u_7,u_8\}$ and $\{u_2,u_5,u_7,u_8\}$, respectively. Thus by Lemma \ref{lemz}, we have $u_2u_5, u_4u_6\in R$, and hence by symmetry, $u_1$ and $u_3$ are blue-adjacent to $\{u_6,u_7,u_8\}$ and $\{u_5,u_7,u_8\}$, respectively.  This leads to a blue $F_3$ with center $u_8$, which is a contradiction. If  If there is no red $2K_2$ between $M_r$ and $\{u_5,\ldots,u_8\}$, by Lemma \ref{lemz},  
we can find a vertex in $\{u_5,\ldots,u_8\}$, say $u_5$, which is red-adjacent to at least three vertices in $V(M_r)$, say $u_1,u_2$ and $u_3$. It follows that $\{u_1,\ldots,u_4\}$ is blue-adjacent to $\{u_6,u_7,u_8\}$. In order to avoid blue $F_3$ with center in $\{u_6,u_7,u_8\}$, $\{u_1,\ldots,u_4\}$ must induce a red $K_4$, which forms a red $K_4+2K_1$ together with $u_5$ and $u$, contrary to Lemma \ref{lemz}.  This completes the proof of Case \ref{case1}.

\begin{case}\label{case2}
$d_R(u)=7$.
\end{case}
Set $N_R(u)=\{u_1,\ldots,u_7\}$ and $T=N_B(u)=\{v_1,\ldots,v_6\}$. We first prove the following claim.
\begin{claim}\label{claim2}
$G[N_R(u)]$ contains no blue $K_3+3K_1$.
\end{claim}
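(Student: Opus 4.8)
The plan is to argue by contradiction, so suppose $G[N_R(u)]$ contains a blue $K_3+3K_1$ whose triangle is $\{a,b,c\}$ and whose independent part is $\{x,y,z\}$; thus all nine edges between $\{a,b,c\}$ and $\{x,y,z\}$ are blue, and I let $w$ denote the single remaining vertex of $N_R(u)$. I first record that in Case \ref{case2} every vertex $v$ satisfies $d_R(v),d_B(v)\in\{6,7\}$: indeed $m=7$ forces $d_R(v),d_B(v)\le 7$, and $d_R(v)+d_B(v)=13$ then gives $d_R(v),d_B(v)\ge 6$. In particular $G$ has no monochromatic $K_5$ by Lemma \ref{lemzz}.

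The first step is a reduction. Since $\{a,b,c\}$ is a blue triangle and each of $x,y,z$ is blue to all of $a,b,c$, each of $\{a,b,c,x\},\{a,b,c,y\},\{a,b,c,z\}$ is a blue $K_4$; and if any edge inside $\{x,y,z\}$ were blue, say $xy$, then $\{a,b,c,x,y\}$ would be a blue $K_5$, contradicting Lemma \ref{lemzz}. Hence $\{x,y,z\}$ is a red triangle, and since $u$ is red-adjacent to $x,y,z$, the set $\{u,x,y,z\}$ is a red $K_4$.

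The second step is a separation property: no vertex $p\in\{w\}\cup T$ can be blue to a vertex of $\{a,b,c\}$ and simultaneously blue to a vertex of $\{x,y,z\}$. If, say, $p$ were blue to $a$ and to $x$, then $px,by,cz$ would be three disjoint blue edges in $N_B(a)$, yielding a blue $F_3$ centered at $a$; the remaining cases are symmetric. Consequently every vertex of $\{w\}\cup T$ is red to all of $\{a,b,c\}$ or red to all of $\{x,y,z\}$. I then exploit the degree bounds inside the $7$-element set $\{w\}\cup T$: since $d_B\ge 6$, each of $a,b,c$ has at least one blue neighbor there, while each of $x,y,z$ has between $3$ and $4$ blue neighbors there, because for instance $d_B(x)=3+|N_B(x)\cap(\{w\}\cup T)|$ and $d_R(x)=10-|N_B(x)\cap(\{w\}\cup T)|$ must both lie in $\{6,7\}$. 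By the separation property these two families of blue neighborhoods lie in complementary parts of $\{w\}\cup T$; in particular the blue neighbors of $x,y,z$ are red to all of $\{a,b,c\}$, and symmetrically.

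The last step is to extract a monochromatic $F_3$ and so reach a contradiction. The two natural targets are a blue $F_3$ centered at one of $a,b,c$ (or one of $x,y,z$), built from an external blue neighbor together with the blue edges among the six special vertices, and a red $F_3$ centered either at a vertex of $\{a,b,c\}$, which has at least five red neighbors in $\{w\}\cup T$, or at a vertex of $\{x,y,z\}$, obtained by augmenting an edge of the red triangle $\{x,y,z\}$ (or of the red $K_4$ $\{u,x,y,z\}$) with a red matching of size two found inside $\{w\}\cup T$. I expect the main obstacle to be exactly this final extraction: one must rule out the sparse configurations in which the relevant red or blue graph on a $4$-element subset of $\{w\}\cup T$ has matching number at most $1$. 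Handling these should require feeding in the degree conditions for the vertices of $T\cup\{w\}$ themselves, each of which is blue to $u$ and thus has part of its coloring pinned down, together with Lemmas \ref{lemz} and \ref{lemzz}, through a short case analysis on the sizes $|N_B(x)\cap(\{w\}\cup T)|\in\{3,4\}$ and on the red adjacencies within $T\cup\{w\}$.
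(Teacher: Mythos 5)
Your first two steps are correct and track the paper's opening moves: the independent part $\{x,y,z\}$ must induce a red triangle (else a blue $K_5$ contradicts Lemma \ref{lemzz}), and your separation property is a clean packaging of facts the paper derives ad hoc — it yields both that $w$ is red-adjacent to $\{a,b,c\}$ (since $w$ red to all of $\{x,y,z\}$ would give the red $K_5$ on $\{u,w,x,y,z\}$) and, combined with the degree bounds, that each of $a,b,c$ has exactly one blue neighbor in $T$. Up to this point your argument is sound and essentially parallel to the paper.

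The genuine gap is that your "last step" is not a proof but a declaration of intent: you list the natural targets for a monochromatic $F_3$, concede that "the main obstacle" is ruling out the sparse configurations where the relevant red/blue graphs on subsets of $\{w\}\cup T$ have matching number at most $1$, and assert this "should require \ldots a short case analysis." That obstacle is precisely where the paper's proof does almost all of its work, and the analysis is not short. After reaching the same intermediate structure you have, the paper must: (i) invoke Theorem \ref{CL} to extract a red $K_{1,3}$ inside $G[T]$ (a blue $3K_2$ in $T$ together with $u$ would be a blue $F_3$); (ii) split on whether the center of every such red star has a blue neighbor in the blue triangle; and (iii) in each branch run a genuinely intricate argument using Hall's theorem, Lemma \ref{lemz}, Lemma \ref{lemzz}, and the pinned-down colors of edges at $u$ — roughly a page of casework, including the construction of a specific red $K_3+4K_1$ configuration before the final contradiction. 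None of the configurations eliminated there follow from your separation property and degree counts alone, so as written your proposal establishes only the preliminary structure of Claim \ref{claim2}, not the claim itself.
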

\begin{proof}
Suppose not. Let $V(K_3)=\{u_1,u_2,u_3\}$ and $V(3K_1)=\{u_4,u_5,u_6\}$. Then by Lemma \ref{lemzz}, we can derive that  $\{u_4,u_5,u_6\}$ induces a red $K_3$, which further implies $N_B(u_7)\cap \{u_4,u_5,u_6\}\not=\emptyset$, say $u_6u_7\in B$. Hence, to avoid a blue $F_3$ with center $u_i$, we have $u_7u_i\in R$ for $i\in [3]$. Moreover, we have $|N_B(u_i)\cap T|\ge 1$ as $d_B(u_i)\ge 6$. If $|N_B(u_i)\cap T|\ge 2$ for some $i\in [3]$, without loss of generality, assume that $u_1v_1,u_1v_2\in B$. To avoid a blue  $F_3$ with center $u_1$, $\{v_1,v_2\}$ is red-adjacent to $\{u_4,u_5,u_6\}$ and $v_1v_2\in R$, implying $\{v_1,v_2,u_4,u_5,u_6\}$ induces a red $K_5$,  contrary to Lemma \ref{lemzz}. Thus, $|N_B(u_i)\cap T|= 1$ for $i\in [3]$. 

By Theorem \ref{CL}, $G[T]$ contains a red $K_{1,3}$ as  $G[T]$ contains no blue $3K_2$. Without loss of generality, we assume that $K_{1,3}=\{v_4: v_1,v_2,v_3\}$ with center $v_4$. 
\begin{figure}[htp]
\begin{center}
	\includegraphics[scale=.6]{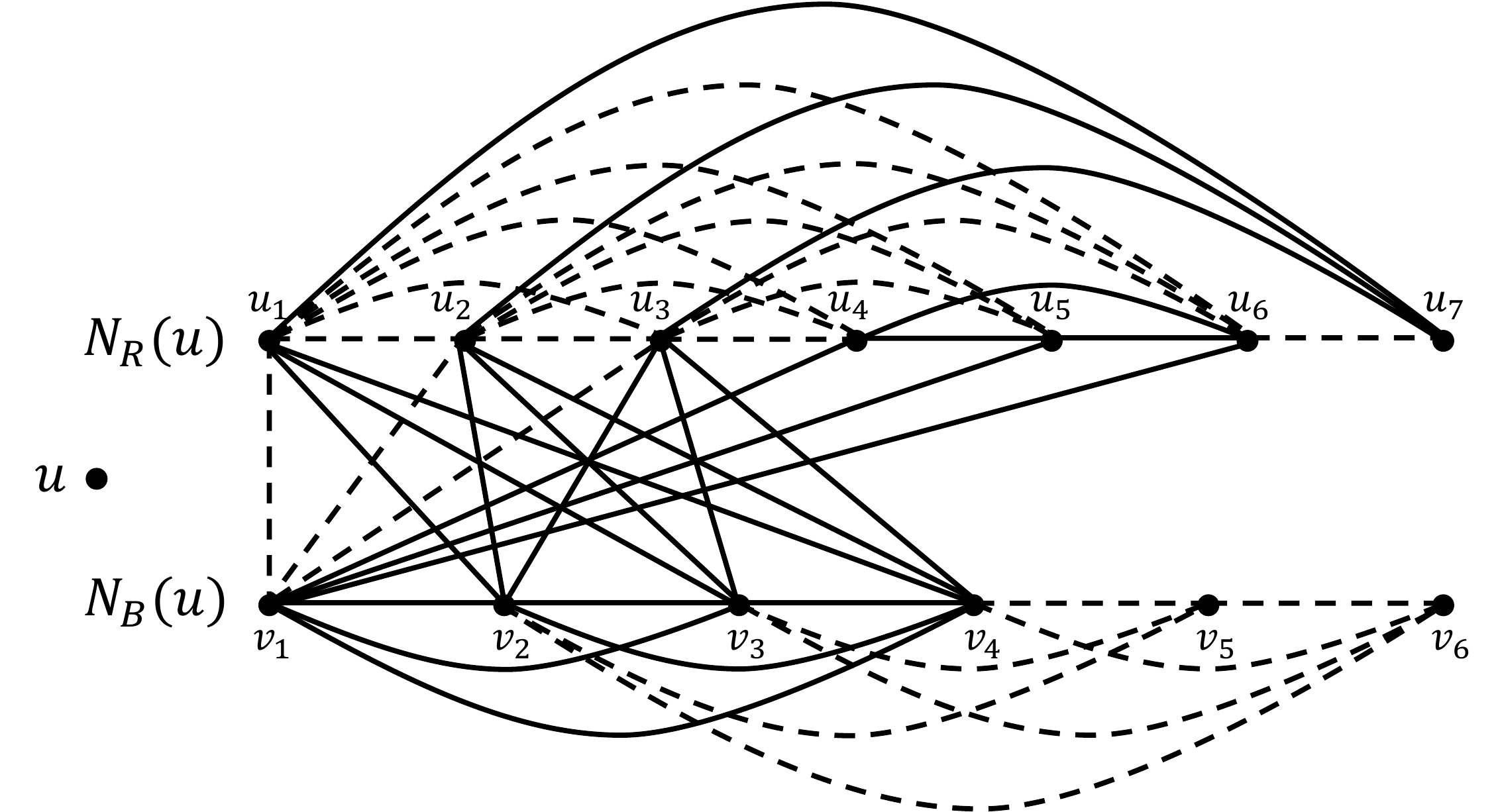}
\end{center}
\caption{ $\{v_1, v_2,v_3, v_4, u_1,u_2,u_3\}$ induces a red $K_3+4K_1$ and  $v_1u_j\in R$ for $j=4,5,6$.}
\label{t2}
\end{figure}

If $v_4\notin N_B(u_i)\cap T$ for any $i\in [3]$, then $v_4\in N_R(u_i)$. Noting that $|N_R(u_i)\cap \{v_1,v_2,v_3\}|\ge 2$ as  $|N_B(u_i)\cap T|=1$ for $i\in [3]$, by Hall's theorem there is a red $3K_2$ between $\{u_1,u_2,u_3\}$ and $\{v_1,v_2,v_3\}$ when $N_R(v_i)\cap \{u_1,u_2,u_3\}\not=\emptyset$ for any $i\in [3]$, which leads to a red $F_3$ with center $v_4$, a contradiction. Thus there exists some $i\in [3]$, say $i=1$, such that $N_R(v_1)\cap \{u_1,u_2,u_3\}=\emptyset$. Then we have $v_1\in N_B(u_i)$ and $\{v_2,v_3,\cdots, v_6\}\subseteq N_R(u_i)$ for $i\in [3]$, implying $\{v_5,v_6\}\subseteq N_B(v_4)$ to avoid a red $F_3$ with center $v_4$. Noting that $\{u_1,u_7,u\}$ induces a red triangle, there is no red $2K_2$ in $G[\{v_2,v_3,\cdots, v_6\}]$, implying $v_5v_6\in B$ and $\{v_5,v_6\}$ is blue-adjacent to $\{v_2,v_3\}$.  Thus, $v_2v_3\in R$ by applying Lemma \ref{lemzz} to $\{v_2,v_3,v_5,v_6, u\}$. Since there is no blue $3K_2$ in $G[T]$, we have $v_1v_2, v_1v_3\in R$. Notice that $\{v_1, v_2,v_3, v_4, u_1,u_2,u_3\}$ induces a red $K_3+4K_1$, $u_7u_i\in R$ for $i\in [3]$ and  $v_1u_j\in R$ for $j=4,5,6$ (see Fig. \ref{t2}. The solid lines are colored with red and the dashed lines are colored with blue). To avoid a red $F_3$ with center in $\{v_1,\ldots,v_4\}$, $\{u_4,\ldots,u_7\}$ is blue-adjacent to $\{v_2,v_3,v_4\}$. To avoid a blue $F_3$ with center $u_7$, we may assume that $u_5u_7\in R$. Since $d_B(u_5)\le 7$, we have $N_R(u_5)\cap \{v_5,v_6\}\not=\emptyset$, say $v_5u_5\in R$. Then $\{u, u_4,u_5,u_6,u_7, v_1,v_5\}$ induces a red $F_3$ with center $u_5$ whenever $v_5u_7\in R$ or $\{u, u_7, v_2,\ldots,v_6\}$ induces a blue $F_3$ with $v_5$ whenever $v_5u_7\in B$, which is a contradiction.

Now, we may assume that the center of any induced red $K_{1,3}$ in $G[T]$ has a blue neighbor in $\{u_1,u_2,u_3\}$. Let $v_4u_1\in B$ and $T'=T-\{v_4\}$. Then $T'\subseteq N_R(u_1)$ and there is no red $2K_2$ in $G[T']$ as $\{u_1,u_7,u\}$ induces a red $K_3$. If there is a vertex, say $x$, in $T'$ such that $|N_R(x)\cap T|\ge 3$. Without loss of generality, we may assume that $u_2x\in B$ as $x$ is the center of a red $K_{1,3}$. Recall that $|N_B(u_i)\cap T|=1$ for $i\in [3]$, implying either $\{u_3,u_2\}\subseteq N_R(v_4)$  or $\{u_3,u_1\}\subseteq N_R(x)$, which leads to $d_R(v_4)\ge 8$ or $d_R(x)\ge 8$ as $\{v_4,x\}$ is red-adjacent to $\{u_4,u_5,u_6\}$ to avoid a blue $F_3$, a contradiction. Thus $|N_R(x)\cap T|\le 2$ for any $x\in T'$. Without loss of generality, we may assume that $v_1v_2,v_1v_3\in B$. When $v_2v_3\in R$, then $\{v_5v_6,v_1v_5,v_1v_6\}\subseteq B$ to avoid a red $2K_2$ in $G[T']$. Since $|N_R(v_2)\cap T|\le 2$, $\{u,v_1, v_2,v_5,v_6\}$ induces a blue $K_5$, a contradiction. When $v_2v_3\in B$, since $|N_R(v_i)\cap T|\le 2$ for $i\in [3]$, to avoid a blue $K_5$, $v_5$ and $v_6$ must have different red neighbors in $\{v_1,v_2,v_3\}$, yielding a red $2K_2$ in $G[T']$, a contradiction again.
\end{proof}

We are now ready to finish the proof for Case \ref{case2}. Let $M_r$ denote the maximum red matching in $G[N_R(u)]$. By similar arguments as in Case \ref{case1}, we have $|M_r|=2$. Without loss of generality, assume that $u_1u_2,u_3u_4\in M_r$. Then $\{u_5,u_6,u_7\}$ induces a blue $K_3$. Let $S= \{u_1,u_2,u_3,u_4\}$ and assume that, in $S$, $u_1$ has the minimum number of red neighbors in $\{u_5,u_6,u_7\}$, denoted by $d$. 
\begin{figure}[htp]
\begin{center}
	\includegraphics[scale=.55]{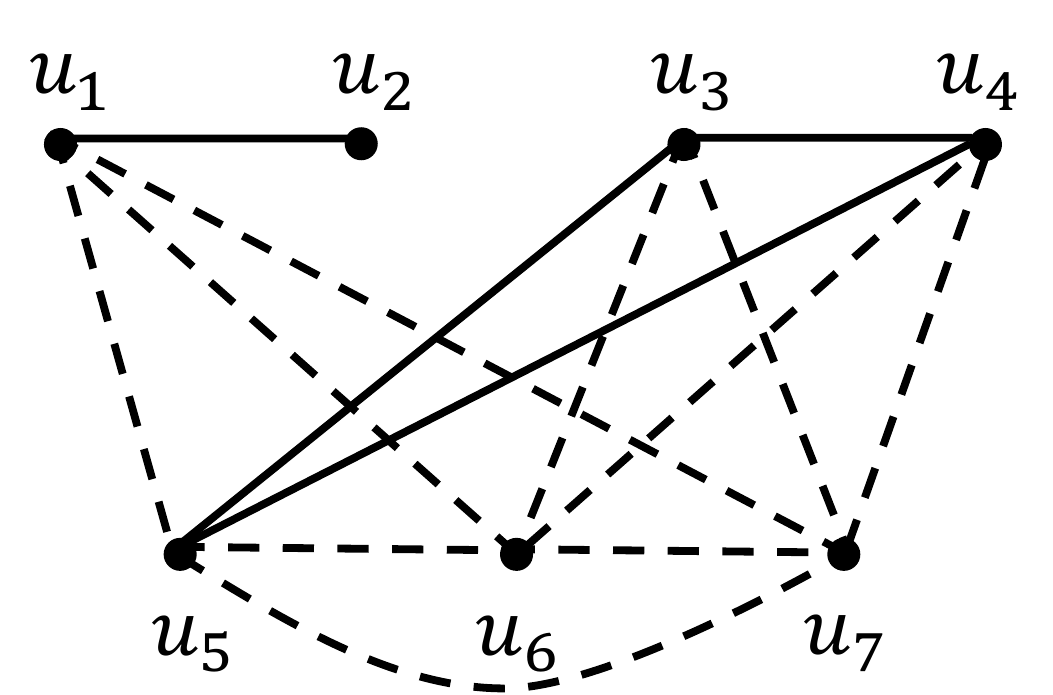}
\end{center}
\caption{Illustration for Case \ref{case2} when $d=0$.}
\label{t3}
\end{figure}

If $d\ge 1$, as $|M_r|=2$, we have $|N_R(u_i)\cap \{u_5,u_6,u_7\}|=1$ for $i\in[4]$ and there exist $w_j\in \{u_5,u_6,u_7\}$ such that $w_j\in N_R(u_j)\cap N_R(u_{j+1})$ for $j=1,3$.
When $w_1\not=w_3$, we may assume that $u_7\notin \{w_1,w_3\}$ and $u_7$ is blue-adjacent to $S$. As  $|M_r|=2$, $G[S]$ contains a blue $C_4$, which leads to a blue $F_3$ with center $u_7$, a contradiction. When $w_1=w_3$, say $u_5=w_1=w_3$, then $\{u_6,u_7\}$ is blue-adjacent to $S$. Clearly, to avoid a blue $F_3$ with center $u_6$ or $u_7$,
there is no blue $2K_2$ in $G[S]$. Thus $G[S]$ contains a red triangle, which forms a red $K_5$ together with $u_5$ and $u$, contrary to Lemma \ref{lemzz}.

If $d=0$, then $\{u_5,u_6,u_7\}\subseteq N_B(u_1)$. When there is a vertex in $\{u_3,u_4\}$, say $u_3$, such that $\{u_5,u_6,u_7\}\subseteq N_B(u_3)$, by Lemma \ref{lemzz}, we have $u_1u_3\in R$.  By Claim \ref{claim2}, $N_R(u_j)\cap \{u_5,u_6,u_7\}\not=\emptyset$ for $j=2,4$. As $|M_r|=2$, there exists exactly one vertex, say $u_5$, such that $u_5$ is red-adjacent to $\{u_2,u_4\}$. Thus, $\{u_6,u_7\}\subseteq N_B(u_j)$, $j=2,4$. To avoid a blue $F_3$ with center $u_6$ or $u_7$, we see that $S\cup \{ u\}$ induces a red $K_5$, a contradiction to Lemma \ref{lemzz}. Hence, $\{u_5,u_6,u_7\}\cap N_R(u_i)\not=\emptyset$ for $i=3,4$. As $|M_r|=2$, there exists exactly one vertex, say $u_5$, such that $u_5$ is red-adjacent to $\{u_3,u_4\}$, implying $\{u_6,u_7\}\subseteq N_B(u_i)$ for $i=3,4$ (see Fig. \ref{t3}. The solid lines are colored with red and the dashed lines are colored with blue). When $N_R(u_2)\cap \{u_6,u_7\}\not=\emptyset$, then $u_1$ is blue-adjacent to $\{u_3, u_4\}$ and $G[N_R(u)]-\{u_2\}$ contains a blue $K_3+3K_1$ with $G[\{u_1,u_6,u_7\}]$ as the blue $K_3$, contrary to Claim \ref{claim2}. When $N_R(u_2)\cap \{u_6,u_7\}=\emptyset$, then  $\{u_6,u_7\}\subseteq N_B(u_2)$ and to avoid a blue $F_3$ with center $u_7$, we have $u_2u_i\in R$ for $i=3,4$, implying $u_2u_5\in B$, otherwise  $\{u_2,u_3,u_4,u_5\}\cup \{ u\}$ induces a red $K_5$. Finally, to avoid a blue $F_3$ with center $u_7$, we have $u_1u_3,u_1u_4\in R$, implying that  $S\cup \{ u\}$ induces a red $K_5$,  contrary to Lemma \ref{lemzz}. This completes the proof of Case \ref{case2}.

 Hence, we complete the proof of Theorem \ref{thm2}.\qed


\begin{thebibliography}{99}
\bibitem{Ba} P. Bahls, T.S. Spencer, On the Ramsey numbers of trees with small diameter, Graphs Combin. 29 (2013) 39--44.
%
\vspace {-0.25cm}
%
\bibitem{Bas} E.T. Baskoro, Surahmat, The Ramsey number of paths with respect to wheels, Discrete Math. 294 (2005) 275--277.
%
\vspace {-0.25cm}
%
\bibitem{Bask} E.T. Baskoro, Surahmat, S.M. Nababan, M. Miller, On Ramsey graph numbers for trees versus wheels of five or six vertices, Graphs Combin. 18 (2002) 717--721.
%
\vspace {-0.25cm}
%
\bibitem{Bren} M. Brennan, Ramsey numbers of trees and unicyclic graphs versus fans, Discrete Math. 340 (2017) 969--983.
%
\vspace {-0.25cm}
%
\bibitem{B} S.A. Burr, P. Erdős, J.H. Spencer, Ramsey theorems for multiple copies of graphs, Trans. Amer. Math. Soc. 209 (1975) 87--99.
%
\vspace {-0.25cm}
%
\bibitem{CYZ} G. Chen, X. Yu, Y. Zhao, Improved bounds on the Ramsey number of fans, European J. Combin. 96 (2021) 103347.
%
\vspace {-0.25cm}
%
\bibitem{CZZ} Y. Chen, Y. Zhang, K. Zhang, The Ramsey numbers of stars versus wheels, European J. Combin. 25 (2004) 1067--1075.
%
\vspace {-0.25cm}
%
\bibitem{CZZ1} Y. Chen, Y. Zhang, K. Zhang, The Ramsey numbers $R(T_n,W_6)$ for $\Delta(T_n)\ge n-3$, Appl. Math. Lett. 17 (2004) 281--285.
%
\vspace {-0.25cm}
%
\bibitem{CZZ2} Y. Chen, Y. Zhang, K. Zhang, The Ramsey numbers of trees versus $W_6$ or $W_7$, European J. Combin. 27 (2006) 558--564.
%
\vspace {-0.25cm}
%
\bibitem{CHV} V. Chv\'atal, Tree-complete graph Ramsey numbers, J. Graph Theory 1 (1) (1977) 93--93.
%
\vspace {-0.25cm}
%
\bibitem{CHVH1} V. Chv\'atal, F. Harary, Generalized Ramsey theory for graphs. II. small diagonal numbers, Proc. Amer. Math. Soc. 32 (2) (1972) 389--394.
%
\vspace {-0.25cm}
%
\bibitem{CHVH2} V. Chv\'atal, F. Harary, Generalized Ramsey theory for graphs. III. small off-diagonal numbers, Pacific J. Math. 41 (2) (1972) 335--345.
%
\vspace {-0.25cm}
%
\bibitem{CHVH3} V. Chv\'atal, F. Harary, Generalized Ramsey theory for graphs. I. diagonal numbers, Period. Math. Hungar. 3 (1-2) (1973) 115--124.
%
\vspace {-0.25cm}
%
\bibitem{CL} E.J. Cockayne, P.J. Lorimer, On Ramsey graph numbers for stars and stripes, Canad. Math. Bull. 18 (1975) 31--34.
%
\vspace {-0.25cm}
%
\bibitem{CFS} D. Conlon, J. Fox, B. Sudakov, Recent developments in graph Ramsey theory, in Surveys in combinatorics, London Math. Soc. Lecture Note Ser., Cambridge Univ. Press, Cambridge, 424 (2015) 49--118.
%
\vspace {-0.25cm}
%
\bibitem{DM} V. Dvo\v{r}\'ak, H. Metrebian, A new upper bound for the Ramsey number of fans, arXiv:2109.07935v1. 
%
\vspace {-0.25cm}
%
\bibitem{GV} Y. Guo, L. Volkmann, Tree-Ramsey numbers, Australas. J. Combin. 11 (1995) 169--175.
%
\vspace {-0.25cm}
%
\bibitem{H} P. Hall, On representation of subsets, J. Lond. Mat. Sc. 10 (1935) 26--30.
%
\vspace {-0.25cm}
%
\bibitem{LR} Y. Li, C.C. Rousseau, Fan-complete graph Ramsey numbers, J. Graph Theory 23(4) (1996) 413--420.
%
\vspace {-0.25cm}
%
\bibitem{LL} Q. Lin, Y. Li, On Ramsey numbers of fans, Discrete Appl. Math. 157 (2009) 191--194.
%
\vspace {-0.25cm}
%
\bibitem{LLD} Q. Lin, Y. Li, L. Dong, Ramsey goodness and generalized stars, European J. Combin. 31(5) (2010) 1228--1234, 2010.
%
\vspace {-0.25cm}
%
\bibitem{R} S. Radziszowski, Small ramsey numbers, Electronic J. of Combin. 1:Dynamic Survey 1, 30, 1994.
%
\vspace {-0.25cm}
%
\bibitem{RS} C. C. Rousseau, J. Sheehan, On Ramsey numbers for books, J. Graph Theory 2(1) (1978) 77--87. 
%
\vspace {-0.25cm}
%
\bibitem{ZB} Y. Zhang, H. Broersma, Y. Chen, Ramsey numbers of trees versus fans, Discrete Math. 338 (6) (2015) 994--999.
%
\vspace {-0.25cm}
%
\bibitem{Z} Y. Zhang, H. Broersma, Y. Chen, A note on Ramsey numbers for fans, Bull. Aust.  Math. Soc. 92(1) (2015) 19--23.
\end{thebibliography}
\end{document}